\newtheorem{theorem}{\bf Theorem}[section]
\newtheorem{proposition}[theorem]{\bf Proposition}
\newtheorem{definition}[theorem]{\bf Definition}
\newtheorem{lemma}[theorem]{\bf Lemma}
\newtheorem{corollary}[theorem]{\bf Corollary}
\newtheorem{remark}[theorem]{\bf Remark}
\newtheorem{fact}[theorem]{\bf Fact}
\def\C{{\mathbb C}}
\def\N{{\mathbb N}}
\def\R{{\mathbb R}}
\def\Q{{\mathbb Q}}
\def\D{\mathbb{D}}
\def\p{\mathbb{P}}\def\P{\mathbb{P}}
\def\supp{\textup{supp}}
\def\bif{\textup{bif}}
\title{Julia sets and bifurcation loci}
\author{Thomas Gauthier}
\address{Laboratoire de Math\'ematiques d'Orsay, B\^atiment 307, Universit\'e Paris-Saclay, 91405 Orsay Cedex, France}
\email{thomas.gauthier@universite-paris-saclay.fr}
\author{Gabriel Vigny}
\address{LAMFA, Universit\'e de Picardie Jules Verne, 33 rue Saint-Leu, 80039 AMIENS Cedex 1, FRANCE}
\email{gabriel.vigny@u-picardie.fr}
\thanks{The first author's research is partially supported by the Institut Universitaire de France}
\thanks{The second author's research is partially supported by the ANR QuaSiDy, grant [ANR-21-CE40-0016]}
\begin{document}
	\begin{abstract}
We prove that several dynamically defined fractals in $\mathbb{C}$ and $\mathbb{C}^2$ which arise from different type of polynomial dynamical systems can not be the same objects. One of our main results is that the closure of Misiurewicz PCF cubic polynomials (the strong bifurcation locus) cannot be the Julia set of a regular  polynomial endomorphism of $\C^2$. We also show that the Julia set of a Hénon map and a polynomial endomorphism cannot coincide.
\end{abstract}

\maketitle

\noindent \textbf{Keywords. Cubic polynomials, H\'enon maps, Polynomial endomorphisms, Counting of preperiodic points} 
\medskip

\noindent \textbf{Mathematics~Subject~Classification~(2020): 37F10, 37P30, 37F46}

\section{Introduction}
Fractals are intriguing geometric objects whose loose definition involves irregular shapes and  auto-similarity. They can be produced using complex dynamics where they are everywhere as Julia sets of rational maps or as bifurcation loci of family of rational maps such as the famous boundary $\partial \mathcal{M}$ of the Mandelbrot set. Going deeper, one can wonder whereas two fractals belong to the same family. More precisely, 
a long standing rigidity's problem in complex dynamics was to show that $\partial \mathcal{M}$ is never the Julia set of a rational map. This was recently shown by Ghioca, Krieger and Nguyen for polynomial maps \cite{Ghioca_Krieger_Nguyen}. To do that, they show that the harmonic (or bifurcation) measure of the Mandelbrot is not the equilibrium measure of a polynomial. Luo \cite{Luo} extended their result by showing that $\partial \mathcal{M}$ is not the Julia set of a rational map. He shows it is a consequence of the following fact: while Julia sets have many local symmetries, $\partial \mathcal{M}$ does not. This relies deeply on Lei's similarity between $\partial \mathcal{M}$  and the Julia set of some polynomials  \cite{Tan-similarity}.   

\begin{figure}[htbp]
	\includegraphics[height=4cm]{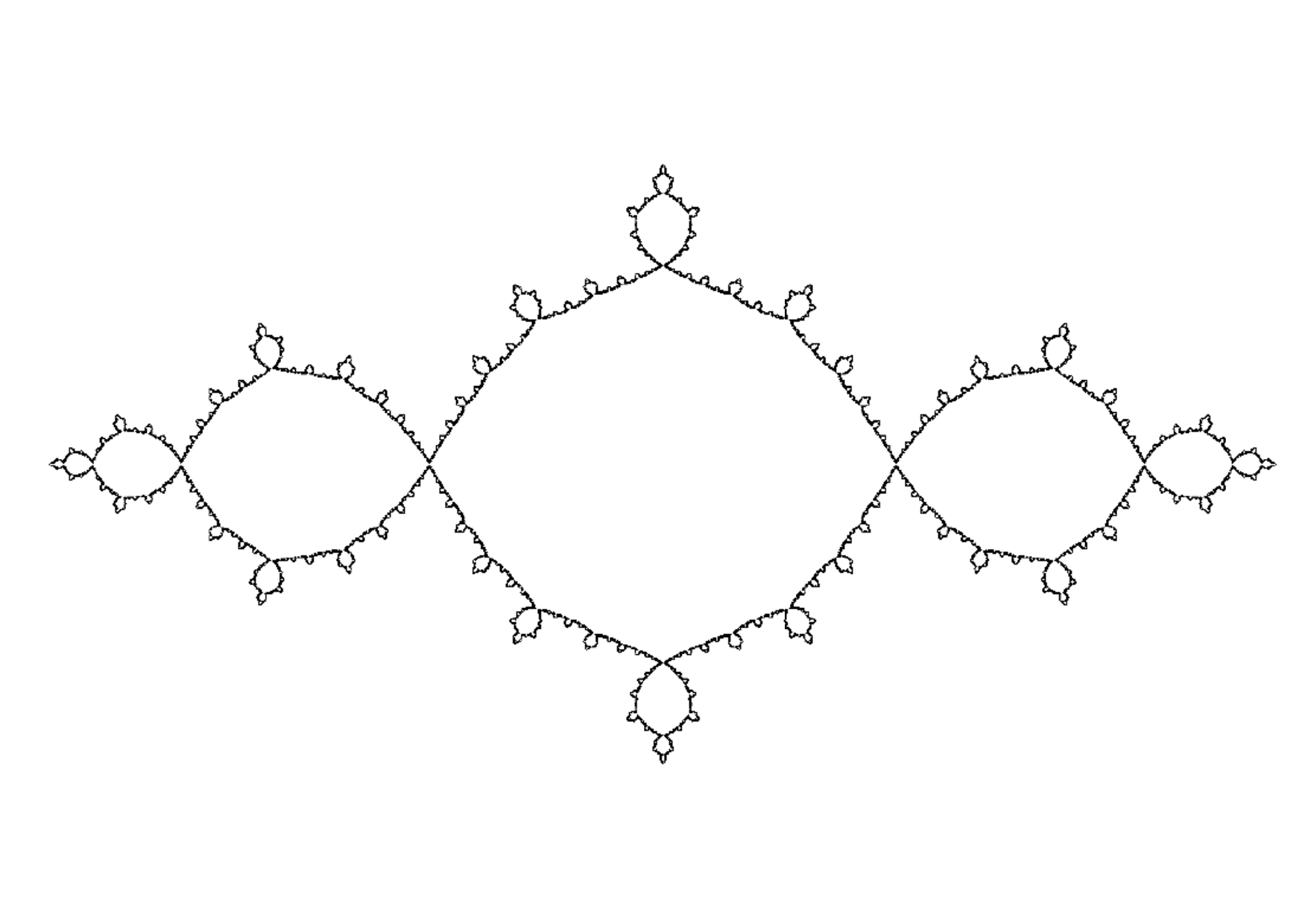}
	\includegraphics[height=4cm]{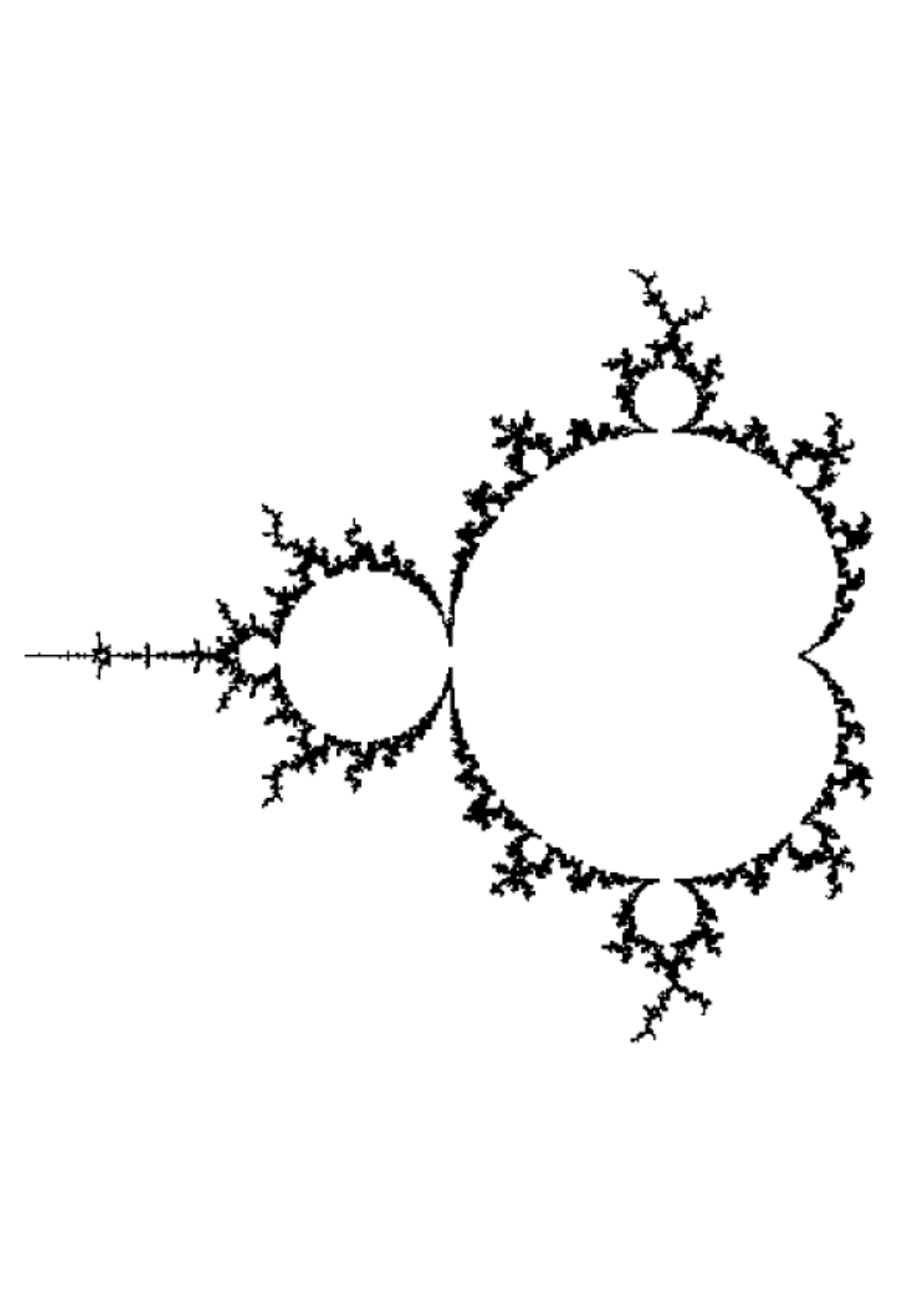}
	\caption{Left: the Julia set of $z^2-1$. Right: $\partial \mathcal{M}$.}
\end{figure}

Many similar rigidity's results were a driving force in complex dynamics, with striking applications in arithmetic dynamics. As in \cite{Ghioca_Krieger_Nguyen} above, fractals in complex dynamics carry canonical invariant measures and one can express rigidity statements in term of sets (when do different objects produce the same fractal?) or in term of measures  (when do different objects produce the same measure?). Notably  
\begin{itemize}
	\item Beardon~\cite{Beardon-sym,Beardon-sameJ} showed that polynomials sharing their Julia set have to share an iterate, up to affine symmetry of the Julia set (see also \cite{schmidt-steinmetz}). Levin-Przyticki~\cite{Levin-Przytycki} generalized this result to rational maps, expressing the statement in terms of the Green measures when the Julia set is $\mathbb{P}^1(\mathbb{C})$ (see also~\cite{ye} and \cite{pakovich}).
	\item Building on those results, Baker-DeMarco~\cite{BDM1} showed rational maps having an infinite set of preperiodic points in common must have the same Julia set and Green measure. DeMarco-Mavraki~\cite{DeMM} recently made this property uniform: for any two generic rational maps, there is a  uniformly bounded number of preperiodic points in common.
	\item Looking at local rigidity properties, Levin~\cite{Levin-sym} showed that a Julia set can not have too many local symmetries. Building on this result, Dujardin-Favre-Gauthier~\cite{DFG} showed that under mild assumptions, a local symmetry of a Julia set comes from a global symmetry, and Ji-Xie~\cite{Ji-Xie-local} generalized this result.
	\end{itemize}

The purpose of the article is to address similar questions in higher dimension, especially in dimension $2$. Note that, while rational maps of the Riemann sphere of degree $D$ are always holomorphic of topological degree $D$, the situation is wilder in $\C^2$ with the possibility of indeterminacy sets and various relations for the algebraic and topological degrees \cite{Sibony}. Furthermore, the bifurcation locus can be stratified in several parts, in terms of supports of positive closed currents. We chose here to consider only the part with maximal bifurcation (so we stay in the formalism of measures) and we consider only three paradigmatic cases. The following definitions we give in the introduction for the different fractal sets are not the usual ones, though they are equivalent, but it allows us to state our results without resorting to pluripotential theory.   
\begin{enumerate}
	\item $h:\C^2 \to \C^2$, a polynomial endomorphism of $\P^2(\C)$ of degree $D > 1$: it is a polynomial map of $\C^2$ whose homogeneous extension to $\P^2$ is holomorphic. The (small) Julia set $J_h$ we consider is contained in the accumulation set of $\mathrm{Preper}(h):=\{z\in \C^2, \ \exists \, n,k \in \N^*\times \N, \ h^{n+k}(z)=h^k(z) \}$ which is the set of points with finite orbit for $h$. More precisely, if $R_n$ denote the set of periodic points of period $n$ ($h^n(z)=z$) which are repelling (the eigenvalues of $D(f^n)(z)$ have modulus $> 1$), then the sequence of measures
	\[\lim_n D^{-2n} \sum_{z\in R_n} \delta_z\] 
	converges in the sense of measure to a measure $\mu_h$ and $J_h= \supp(\mu_h)$ (\cite{briendduval2}).
	\item for $x,y\in \C^2$, we consider the polynomial 
	\[ P_{x,y}(z)= \frac{1}{3}z^3-\frac{x}{2}z+y \]    
	so that $(x,y) \mapsto P_{x,y}$ is an orbifold parametrization of the cubic polynomials. The \emph{marked critical points} are $c_1(x,y)= 0$ and $c_2(x,y)=x$. A parameter $(x,y)$ is post-critically finite, PCF for short, if there exist $(n_1,m_1), (n_2,m_2)\in \N \times \N^*$ such that $P_{x,y}^{n_{i}+m_{i}}(c_i(x,y))= P_{x,y}^{n_{i}}(c_i(x,y))$ for $i=1,2$. Let $PCF\subset \C^2$ denote the set of PCF cubic maps. The strong bifurcation locus $S_\bif$ is defined here as the accumulation set of $\mathrm{PCF}$ \cite{buffepstein}.
	\item $f:\C^2\to \C^2$, a (generalized) H\'enon map: it is a polynomial automorphism of $\C^2$ of degree $d>1$ such that its homogeneous extension (resp. the extension of $f^{-1}$) to $\P^2$ admits an indeterminacy point $I^+$ (resp. $I^-$) with $I^+\neq I^-$. The Julia set $J_f$ of $f$ is defined as the accumulation set of $\mathrm{Per}(f):=\{z\in \C^2, \ \exists \, n\in \N^*, \ f^{n}(z)=z \}$ \cite{BedfordLyubichSmillie}.
\end{enumerate}  
Our main result is 
\begin{theorem}\label{Support} Let $h:\C^2 \to \C^2$ be a polynomial endomorphism of $\P^2(\C)$ of degree $D > 1$, $f:\C^2\to \C^2$, a  H\'enon map of degree $d>1$. Then:
	\[   J_h \neq S_\bif \neq J_f \neq J_h.\]
\end{theorem}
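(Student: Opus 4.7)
My strategy is to treat the three inequalities separately, each by extracting a structural obstruction from the assumption that two of the fractals coincide.

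For $J_h\neq J_f$, I would compare the Lyapunov structure carried by the canonical measures. The measure $\mu_h$ has two strictly positive Lyapunov exponents (Briend-Duval), while $\mu_f$ has one positive and one negative exponent (Bedford-Lyubich-Smillie), so their local hyperbolic geometry is incompatible. If $J_h=J_f=:K$, I would couple this with an equidistribution argument: the repelling periodic points of $h$ (which grow like $D^{2n}$) equidistribute toward $\mu_h$ on $K$, while the periodic points of $f$ (which grow like $d^n$) equidistribute toward $\mu_f$ on $K$. Combined with the local product-type stable/unstable structure of $K$ inherited from Pesin theory for $f$, this is incompatible with the purely expanding local geometry of $J_h$ at a $\mu_h$-typical point (where both Oseledets directions are expanding). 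The precise incompatibility should rule out $J_h=J_f$.

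For the main result $J_h\neq S_\bif$, I would adapt the one-dimensional strategy of Luo and of Ghioca-Krieger-Nguyen to the cubic parameter space. The plan is to pick a strictly Misiurewicz parameter $t_0\in\mathrm{PCF}$, where both critical orbits are strictly preperiodic to distinct repelling cycles. At such a $t_0$, a Tan-Lei-style similarity (the cubic analogue, due to Buff-Epstein) identifies an infinitesimal blow-up of $S_\bif$ with a product-type model governed by the two independent critical-orbit combinatorics. Assuming $J_h=S_\bif$, the point $t_0$ lies in $J_h$, and blow-ups of $J_h$ along a dense set of preimages of a repelling periodic point of $h$ carry a specific expanding self-similarity \`a la Levin, rigidified by Dujardin-Favre-Gauthier. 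The product form of the Tan-Lei model is incompatible with the indecomposable local self-similarity forced by $h$, yielding the contradiction.

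For $S_\bif\neq J_f$, the strategy is parallel, substituting the Pesin stable/unstable lamination of $J_f$ for the Levin-type symmetry of $J_h$: the hyperbolic splitting at a saddle of $f$ lying in $S_\bif$ is incompatible with the product-type Tan-Lei blow-up of $S_\bif$ at a Misiurewicz parameter, where both critical orbits move "independently" in the parameter directions. The main obstacle throughout is establishing and quantifying the cubic analogue of Tan's similarity in a form robust enough to preclude both kinds of local self-similarities simultaneously; this is where the theme "counting of preperiodic points" enters as the quantitative tool, converting the qualitative incompatibility of self-similarity models into a numerical contradiction between the asymptotic density of $\mathrm{PCF}$ parameters in small balls around $t_0$ and that of repelling preperiodic points of $h$ or periodic points of $f$.
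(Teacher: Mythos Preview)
Your sketch misses the first and most important step in the paper: the reduction from equality of sets to equality of Green functions. Each of $J_h$, $J_f$, $S_\bif$ is the support of $(dd^cG)^2$ for a unique $G\in\mathcal{L}^+(\C^2)$ vanishing on that set (Fact~\ref{fact2}). Hence $J_h=J_f$ forces $G_h=G_f$ and thus $\mu_h=\mu_f$, and similarly for the other pairs. Without this, your Lyapunov-exponent argument for $J_h\neq J_f$ does not start: you only know $\supp(\mu_h)=\supp(\mu_f)$, and $\mu_h$-typical points need not be $\mu_f$-typical. Even granting $\mu_h=\mu_f$, Lyapunov exponents are attached to a map, not to a measure alone, so ``two positive exponents for $h$'' and ``one positive, one negative for $f$'' is not a contradiction. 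The paper instead promotes $G_h=G_f$ to $G_f^+\circ h=DG_f^+$ via a lamination lemma, and then derives that $f\circ h\circ f^{-1}$ would again be a regular endomorphism of $\P^2$, contradicting the Cantat--Xie rigidity that two regular endomorphisms cannot be conjugate by a genuine H\'enon map.

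For $S_\bif\neq J_f$ your Pesin-vs-Tan-Lei plan is far more elaborate than needed, and it is unclear how you would make ``product-type blow-up incompatible with hyperbolic splitting'' rigorous. The paper's argument is almost immediate once $G_f=G_\bif$: this forces $T_f^+$ to be proportional to $T_1$, but $\{G_1=0\}$ contains the algebraic curves $C_{n,m}=\{P^{n+m}_{x,y}(c_1)=P^n_{x,y}(c_1)\}$; extremality of $T_f^+$ among closed positive currents supported on $K_f^+$ then forces $[C_{n,m}]$ to be a multiple of $T_f^+$, contradicting the continuity of $G_f^+$.

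For $J_h\neq S_\bif$ you are closer in spirit, but the ``counting'' you describe (density of PCF vs.\ density of preperiodic points in small balls) is not the mechanism the paper uses and it is unclear how it would yield a contradiction. The paper first shows, again from $G_h=G_\bif$, that $h$ preserves each current $T_i$ and hence sends PCF curves to PCF curves. Restricting to a suitable special curve through a transversely PCF parameter $p_0$, the iterates $h,\dots,h^{N+1}$ give $N+1$ local symmetries of the one-dimensional bifurcation measure; via the Ji--Xie similarity and the Dujardin--Favre--Gauthier rigidity, each forces $P_{h^j(p_0)}$ to be intertwined with $P_{p_0}$. The contradiction is the uniform bound $\#\mathrm{Inter}(P_{p_0})\leq N$ coming from~\cite{book-unlikely}, not a density count.
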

Similarly to \cite{Ghioca_Krieger_Nguyen} where the result is proved by looking at equilibrium measures, Theorem~\ref{Support} is a consequence of Theorem~\ref{tm_Functions} below where we look at equilibrium measures defined in term of pluripotential theory. One common idea behind the proofs is to promote the fact that two sets are equal (e.g. $J_h=J_f$) to the fact that two measures are equal and then to a similar property on some positive closed currents. 

 For $J_h\neq J_f$, we use the laminar structure of the Green current of $f$ and a rigidity result of Cantat and Xie \cite{cantat2020birational} on birational conjugacy of polynomial endomorphisms. The case $J_f\neq S_\bif$ is the easiest and follows from the fact that the Green current of a Hénon map is extremal due to Forn\ae ss and Sibony \cite{FS2} whereas the bifurcation current of a marked point is not.

The most challenging case is $S_\bif \neq J_h$ where we follow Luo's approach and show a rigidity result on families of polynomial maps with a marked point having biholomorphic bifurcation measures (see Proposition~\ref{lm:at-transversely-prerepelling} below).
We study for that local symmetries of bifurcation loci of special one-dimensional families of polynomials. A family of polynomials parametrized by a quasi-projective curve $\Lambda$ is a polynomial map $P:(t,z)\in\Lambda\times \Lambda\mapsto P_t(z)\in\C$ such that $P_t$ is a degree $d\geq2$ polynomial for all $t\in \Lambda$. Such a family is special if there are infinitely many $t\in \Lambda$ such that $P_t$ is PCF, i.e. such that $\bigcup_n P_t^n(\mathrm{Crit}(P_t))$ is an infinite set. In such a family, the bifurcation locus is the accumulation set of the set $\{t\in \C\, ; \ P_t \ \text{is PCF}\}$. 
\begin{theorem}\label{tm:special}
	The bifurcation locus of a special family parametrized by $\C$ is not the Julia set of a polynomial map. 
\end{theorem}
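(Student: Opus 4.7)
The strategy follows Luo: assume $B=J_Q$ for a polynomial $Q$ of degree at least $2$ and derive a contradiction from two incompatible local descriptions of this common fractal. On the $Q$ side, at a repelling periodic parameter $t_0$ of $Q$ of period $m$ and multiplier $\lambda$, Koenigs linearization supplies a holomorphic germ $\psi:(\C,t_0)\to(\C,t_0)$ with $\psi'(t_0)=1/\lambda$ and $\psi(J_Q\cap U)\subset J_Q$ in a neighborhood $U$ of $t_0$. Hence $B$ carries a nontrivial holomorphic local contraction preserving it at every repelling periodic parameter of $Q$, and these are dense in $B=J_Q$.

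On the family side, at any transversely prerepelling parameter $t_0$ of $(P_t)$, Proposition~\ref{lm:at-transversely-prerepelling} produces a local biholomorphism $\Phi$ from a neighborhood $W$ of $t_0$ onto a neighborhood $V$ of a prerepelling point $z_0\in J_{P_{t_0}}$ such that $\Phi(B\cap W)=J_{P_{t_0}}\cap V$. Transversely prerepelling parameters are dense in the bifurcation locus of any special family, so I can choose $t_0$ that is both a repelling periodic point of $Q$ and a transversely prerepelling parameter of $(P_t)$. Conjugating $\psi$ by $\Phi$ then yields a holomorphic contracting local self-similarity $\tilde\psi:=\Phi\circ\psi\circ\Phi^{-1}$ of $J_{P_{t_0}}$ at the non-exceptional point $z_0$, with derivative $1/\lambda$ of modulus strictly less than $1$.

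The contradiction follows from Levin's rigidity theorem on local symmetries of polynomial Julia sets (as sharpened by Dujardin--Favre--Gauthier and Ji--Xie): such a $\tilde\psi$ forces $1/\lambda=\zeta\,\mu^{-k/\ell}$, where $\mu$ is the multiplier of some periodic orbit of $P_{t_0}$, $\zeta$ is a root of unity, and $k,\ell$ are positive integers. For each fixed triple $(k,\ell,\zeta)$ and each periodic orbit of $(P_t)$ of given period, the equation $\lambda^\ell\zeta^{-\ell}=\mu(t_0)^k$ has only finitely many solutions in $t_0$, because the multiplier $t\mapsto\mu(t)$ varies non-constantly in a non-isotrivial family. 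The total solution set over all $(k,\ell,\zeta)$ and all periodic orbits is therefore at most countable, contradicting that it must contain an uncountable dense subset of $B$.

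The main obstacle is Proposition~\ref{lm:at-transversely-prerepelling} itself, whose proof encodes the delicate pluripotential-theoretic analysis on parameter space (biholomorphic bifurcation measures, rigidity of marked points, and the precise local form of the bifurcation current at prerepelling parameters). Once its local identification of $B$ with $J_{P_{t_0}}$ at transversely prerepelling parameters is available, the remaining steps --- Koenigs self-similarity, simultaneous choice of $t_0$, conjugation by $\Phi$, and the Levin--Ji--Xie rigidity --- are essentially bookkeeping.
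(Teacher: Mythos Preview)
Your proof has a substantive gap at the transfer step: Proposition~\ref{lm:at-transversely-prerepelling} does not say what you claim. It does not produce a local biholomorphism $\Phi$ from parameter space to the dynamical plane of $P_{t_0}$ identifying $B$ with $J_{P_{t_0}}$. Both its hypothesis and its conclusion live entirely in parameter space: given two polynomial holomorphic pairs and a biholomorphism $\phi$ between their parameter curves with $\phi^*$ sending one bifurcation measure to the other up to $\asymp$, it concludes that the two polynomials at the chosen parameter are \emph{intertwined} (share a preperiodic correspondence in $\mathbb{P}^1\times\mathbb{P}^1$). The parameter--phase similarity you have in mind is the Tan~Lei/Ji--Xie phenomenon (Theorem~\ref{tm:JiXie} here), which is invoked only inside the \emph{proof} of that proposition and gives only \emph{asymptotic} similarity along a subsequence of rescalings---not a single germ $\Phi$ with $\Phi(B\cap W)=J_{P_{t_0}}\cap V$. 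Without such a $\Phi$ the conjugation $\tilde\psi=\Phi\psi\Phi^{-1}$ is undefined and your appeal to Levin-type rigidity of $J_{P_{t_0}}$ has no input. There are secondary gaps too: repelling periodic points of $Q$ and transversely prerepelling parameters of $(P_t)$ are each \emph{countable} dense subsets of $B$, so there is no reason they meet; and in your endgame both $\lambda=(Q^m)'(t_0)$ and $\mu(t_0)$ vary algebraically with $t_0$, so you would still have to exclude identical coincidences $\lambda^\ell\equiv\zeta^\ell\mu^k$.

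The paper's argument avoids all of this by remaining in parameter space. From $J_h=B$ one deduces $\mu_h=\mu_{\bif}$, since both are the equilibrium measure of the same compact. On a ball $U$ disjoint from $\mathrm{Crit}(h^{N+1})$, the iterates $h,\ldots,h^{N+1}$ are $N+1$ distinct local symmetries of $\mu_{\bif}$. Proposition~\ref{lm:at-transversely-prerepelling} is then applied in its actual form, with both pairs equal to $(P,c_i)$ on the special curve and $\phi=\sigma$ a symmetry: it forces $P_{\sigma(t_0)}\in\mathrm{Inter}(P_{t_0})$ at $\mu_{\bif}$-almost every $t_0$, and since $\#\mathrm{Inter}(P_{t_0})\le N$ uniformly (Proposition~\ref{prop:symm}), at most $N$ symmetries can exist (Theorem~\ref{tm-local-sym-special})---a contradiction.
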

A famous example of special families parametrized by $\C$ is the unicritical family (which is covered by Ghioca, Krieger and Nguyen and by Luo) but there are other relevant examples such as $(t,z)\mapsto z^3-tz^2$ (see,~e.g.,~\cite{MilnorCubic}).

While the article is certainly the first to establish rigidity properties of Julia sets and bifurcation locus coming from different algebraic worlds in dimension $>1$ (Hénon, polynomial endomorphisms and family of cubic polynomials), there have been significant works on the question of understanding maps coming from a same world that share a strong relationship 
\begin{itemize} %Generalizing Ritt's result {\red citer Ritt avant?},
	\item Dinh classified pairs of commuting polynomial endomorphisms of $\P^2(\C)$ \cite{Dinh_permutable}, showing that they either share an iterate or satisfies a specific algebraic equation (see also Dinh and Sibony \cite{Dinh_Sibony_permutable},
 and  Kaufmann \cite{Kaufmann_permutable} for the general case of  endomorphisms of $\P^2(\C)$). 
	\item Generalizing Baker and DeMarco, Yuan and Zhang \cite{Yuan_Zhang_arithmetic_Hodge} showed that polarized endomorphisms defined over a number field share a Zariski dense set of preperiodic points if and only if they have the same Green measure at every place (in particular, at the complex place). 
	\item For complex Hénon maps, Dujardin and Favre showed, in an article which inspired us deeply \cite{DujardinFavre_MM},  that two Hénon maps have the same Julia set if and only if they share an iterate. They rely for that on a rigidity result of Lamy \cite{Lamy_henon} that states two Hénon maps which have the same forward Julia set (set of points of bounded forward orbit) share an iterate.     
	\item Families having similar bifurcation measure in higher dimension has not been investigated and is probably a challenging topic. It should have strong links with the ``Ultimate Dynamical Manin Mumford'' proposed by DeMarco and Mavraki. 
\end{itemize}

As a consequence of the theorem of equidistribution of points of small height (\cite{Lee-Henon} for H\'enon maps and \cite{yuan} for endomorphisms), and of the solution to the Manin-Mumford problem for plane automorphism of Dujardin and Favre \cite{DujardinFavre_MM},  we deduce:
\begin{corollary}\label{cor_unif_henon_polynomial}
	Let $h:\C^2 \to \C^2$  a polynomial endomorphism of $\P^2(\C)$ of degree $D>1$ and let $f:\C^2\to \C^2$ be a generalized H\'enon map of degree $d>1$ with $|\mathrm{Jac}(f)| \neq 1$ then 
	\begin{enumerate}
		\item  $ \mathrm{Preper}(h)\cap \mathrm{Per}(f)$ is a finite set
		\item  $ \mathrm{Per}(f)\cap PCF$ is a finite set.
	\end{enumerate}
\end{corollary}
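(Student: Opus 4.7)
The plan is to argue by contradiction in each case, using arithmetic equidistribution to upgrade an infinite set of common special points to an equality of canonical invariant measures, and then invoke Theorem~\ref{Support} to derive a contradiction.

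For part (1), I assume $\mathrm{Preper}(h)\cap\mathrm{Per}(f)$ is infinite. A standard specialization argument (spreading $h$ and $f$ over a finitely generated $\Z$-algebra and specializing to a geometric point of $\overline{\Q}$) reduces the problem to the case where $h$ and $f$ are defined over a number field $K\subset\overline{\Q}$ while preserving infinitely many common points. Every point $z$ in the intersection is then an algebraic point with canonical height zero both for $h$ (being $h$-preperiodic) and for $f$ (being $f$-periodic). To guarantee that an infinite sub-sequence of such points is not trapped in an exceptional subvariety on which equidistribution would fail, I would invoke the solution of the Manin--Mumford problem for plane polynomial automorphisms of Dujardin--Favre~\cite{DujardinFavre_MM}, which classifies the curves containing infinitely many periodic points of $f$ and crucially uses the hypothesis $|\mathrm{Jac}(f)|\neq 1$. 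Yuan's arithmetic equidistribution theorem~\cite{yuan} applied to $h$, together with Lee's arithmetic equidistribution theorem for H\'enon maps~\cite{Lee-Henon} applied to $f$, then force the Galois orbits of an infinite generic sequence of common points to equidistribute at the archimedean place towards $\mu_h$ on one hand and towards $\mu_f$ on the other. Hence $\mu_h=\mu_f$, so $J_h=\supp(\mu_h)=\supp(\mu_f)=J_f$, contradicting Theorem~\ref{Support}.

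Part (2) follows the same strategy with $PCF$ in place of $\mathrm{Preper}(h)$. PCF cubic parameters $(x,y)$ are algebraic points of bifurcation canonical height zero, and the arithmetic equidistribution theorem applied to the bifurcation measure of the cubic family shows that the Galois orbits of PCF parameters equidistribute towards $\mu_\bif$, whose support is $S_\bif$. Combined with Lee's equidistribution for $f$-periodic points, an infinite intersection $\mathrm{Per}(f)\cap PCF$ yields $\mu_\bif=\mu_f$, hence $S_\bif=J_f$, again contradicting Theorem~\ref{Support}.

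The main obstacle is the specialization step together with the verification that the Dujardin--Favre classification of periodic curves rules out any degenerate distribution of the common points, so that the arithmetic equidistribution theorems truly produce the full measures $\mu_h$, $\mu_f$ and $\mu_\bif$ at the archimedean place; the role of the hypothesis $|\mathrm{Jac}(f)|\neq 1$ is precisely to make Dujardin--Favre's theorem applicable and thereby to exclude such degenerate behaviour for $\mathrm{Per}(f)$.
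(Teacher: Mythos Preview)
Your proposal is essentially the paper's own argument: reduce to the number-field setting by specialization, use Dujardin--Favre's Manin--Mumford theorem for plane automorphisms to exclude the non-Zariski-dense case, then apply arithmetic equidistribution (Yuan for $h$, Lee for $f$, Favre--Gauthier for the bifurcation side) to force $\mu_h=\mu_f$ or $\mu_\bif=\mu_f$ and contradict Theorem~\ref{Support} (equivalently Theorem~\ref{tm_Functions}).

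Two points deserve more care than your wording suggests. First, the specialization step is \emph{not} routine: one must show that if $E=\mathrm{Preper}(h)\cap\mathrm{Per}(f)$ is infinite then so is $E_t$ for closed points $t$ of the spread-out base, and this requires controlling how periodic points can coalesce under specialization. The paper does this via a Shub--Sullivan bound on fixed-point multiplicities (its Lemma~\ref{lm:specialization}, following \cite{DujardinFavre_MM}), not by a generic-freeness argument alone. Second, one must choose the closed point $t_0$ so that $|\mathrm{Jac}(f_{t_0})|\neq1$ still holds; the paper notes that such $t_0$ exists because otherwise $\mathrm{Jac}(f)$ would lie in $\bar\Q$ with absolute value $1$, contradicting the hypothesis. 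You correctly flag the specialization step as the main obstacle, but calling it ``standard'' undersells what is actually needed.
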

 
Relying on the recent work~\cite{DFR} of Dujardin, Favre and Ruggiero on the Manin-Mumford Conjecture for polynomial endomorphisms, we also get the following corollary.
\begin{corollary}\label{cor_unif_bif_polynomial}
Let $h:\C^2 \to \C^2$  a polynomial endomorphism of $\P^2(\C)$ of degree $D>1$. Assume $h$ is defined over a number field. Assume also the restriction $h|_{L_\infty}$ of $h$ to the line $L_\infty$ at infinity of $\mathbb{C}^2$ in $\mathbb{P}^2(\mathbb{C})$ has no super-attracting periodic points. 
	
	Then $\mathrm{Preper}(h)\cap  PCF$ is a finite set.

\end{corollary}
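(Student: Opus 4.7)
The plan is to follow the Baker--DeMarco arithmetic equidistribution paradigm: if $\mathrm{Preper}(h)\cap PCF$ were infinite, then the common special points would force a coincidence of canonical measures incompatible with Theorem~\ref{Support}. Since $h$ is defined over a number field $K$ and the cubic family $P_{x,y}$ is defined over $\Q$, every point $(x,y)\in\mathrm{Preper}(h)\cap PCF$ lies in $\overline{\Q}^2$ and is of height zero for both the Call--Silverman canonical height $\hat h_h$ attached to $h$ and the bifurcation height $\hat h_\bif(x,y):=\hat h_{P_{x,y}}(c_1(x,y))+\hat h_{P_{x,y}}(c_2(x,y))$ on the cubic parameter space.

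Arguing by contradiction, pick a sequence $(z_k)$ of pairwise distinct points in the intersection and let $\mu_k$ denote the uniform probability on the Galois orbit of $z_k$. First assume $(z_k)$ is Zariski generic, meaning no infinite subsequence accumulates on a proper algebraic subvariety of $\C^2$. Then Yuan's arithmetic equidistribution for the polarized endomorphism $h:\P^2\to\P^2$ (\cite{yuan}, in the adelic form of~\cite{Yuan_Zhang_arithmetic_Hodge}) gives $\mu_k\to\mu_h$ weakly, while the analogous equidistribution for the bifurcation height on the cubic parameter space gives $\mu_k\to\mu_\bif$. Hence $\mu_h=\mu_\bif$, so $J_h=\supp\mu_h=\supp\mu_\bif=S_\bif$, contradicting Theorem~\ref{Support}.

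Otherwise some subsequence of $(z_k)$ lies on a proper irreducible algebraic curve $C\subset\C^2$, which then carries infinitely many $h$-preperiodic points. The hypothesis that $h|_{L_\infty}$ has no super-attracting periodic points is precisely the input required by the Dujardin--Favre--Ruggiero Manin--Mumford theorem~\cite{DFR}, which forces $C$ to be $h$-preperiodic: $h^{n+m}(C)=h^n(C)$ for some $n\geq 0$ and $m\geq 1$. After replacing $C$ by $h^n(C)$ we may assume $h^m(C)=C$, yielding a polynomial self-map $g:=h^m|_C$ of the affine curve $C$; the rigidity theory of $h$-preperiodic affine curves in $\C^2$, combined with the $L_\infty$ assumption, ensures that the normalization of $C$ is isomorphic to $\C$. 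Since $C$ also contains infinitely many PCF parameters, the restricted family $(P_t)_{t\in C}$ is a special family in the sense preceding Theorem~\ref{tm:special}. A one-dimensional arithmetic equidistribution on $C$, now using the two canonical heights $\hat h_\bif|_C$ and $\hat h_g$, identifies the bifurcation locus of this special family with the Julia set of the polynomial map $g$, directly contradicting Theorem~\ref{tm:special}.

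The main obstacle is the non-generic branch. Two pieces of machinery must interact cleanly on $C$: the arithmetic equidistribution of Galois orbits on the cubic parameter space must be promoted to the induced height on the embedded curve $C$, identifying the limit measure with (a slice of) $\mu_\bif$; and the rigidity of $h$-preperiodic affine curves, together with the $L_\infty$ hypothesis, must rule out the possibility that the normalization of $C$ is something other than $\C$ (an elliptic curve, or multiply-punctured $\P^1$). Once these points are in place, Theorem~\ref{tm:special} closes the non-generic case and hence the corollary.
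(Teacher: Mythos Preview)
Your overall architecture matches the paper's: split into a Zariski-dense case (handled by Yuan's equidistribution, yielding $\mu_h=\mu_\bif$ and contradicting Theorem~\ref{tm_Functions}) and a curve case (where \cite{DFR} forces the curve $C$ to be $h$-preperiodic). The divergence is in how you close the curve case, and there your argument has two genuine gaps.

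First, when you replace $C$ by its periodic image $h^n(C)$ in order to obtain a self-map $g=h^m|_C$, you lose control of the PCF points. The points of $\mathrm{Preper}(h)\cap PCF$ on the original $C$ map under $h^n$ to $h$-preperiodic points on $h^n(C)$, but there is no reason these images are PCF: nothing in the hypotheses guarantees that $h$ preserves the PCF locus. So the periodic curve on which your $g$ acts need not be special, and you cannot invoke Theorem~\ref{tm:special} on it.

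Second, even granting a curve that is simultaneously special and $h^m$-invariant, the assertion that its normalization is $\mathbb{P}^1$ with a single place over $L_\infty$ (so that $g$ lifts to a polynomial and the family is parametrized by $\C$) is not justified. The hypothesis on $h|_{L_\infty}$ is precisely what makes \cite{DFR} applicable; it does not by itself rule out several branches at infinity or an elliptic normalization. You flag this yourself as ``the main obstacle'' but do not resolve it, and Theorem~\ref{tm:special} genuinely requires the parametrization by $\C$.

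The paper sidesteps both problems by appealing instead to Theorem~\ref{tm-local-sym-special}, which bounds the number of local symmetries of $\mu_{\bif,C}$ for \emph{any} special curve $C\subset\C^2$, with no constraint on its topological type. Concretely, one keeps the original special curve $C$, lets $Q:=h|_{h(C)}$ act on the \emph{periodic image} $h(C)$, and builds local symmetries of $\mu_{\bif,C}$ on $C$ of the form $\sigma_n=\psi_n\circ Q^n\circ h|_U$, where $\psi_n$ is a local inverse branch of $h|_C$. Yuan's equidistribution on $C$ (for the restricted adelic metric on $\mathcal{O}_{\P^2}(1)|_C$) identifies $\mu_{\bif,C}$ with a multiple of $(h|_C)^*\mu_Q$, after which a direct computation gives $\sigma_n^*(\mu_{\bif,C})\asymp\mu_{\bif,C}$. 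Producing $N+1$ distinct such $\sigma_n$ contradicts Theorem~\ref{tm-local-sym-special}. This route never needs to identify the normalization of $C$, never needs $g$ to be a polynomial, and never needs $h$ to send PCF parameters to PCF parameters.
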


 In section~\ref{the cast}, we will define the different players, notably using pluripotential theory and explain how to reduce the proof of Theorem~\ref{Support} to showing that some (psh) functions cannot coincide, see Theorem~\ref{tm_Functions} below. In section~\ref{sec:endo-vs-bif}, we prove that the strong bifurcation locus of cubic polynomials can not be the small Julia set of a regular polynomial endomorphism of $\mathbb{C}^2$ and that the bifurcation locus of a special family can not be the Julia set of a rational map. In section~\ref{sec:Henon}, we show that the Julia set of a Hénon map can not be the Julia set of a polynomial endomorphism or the strong bifurcation locus of cubic polynomials.

\section{The cast}\label{the cast}
We define the cast and use pluripotential theory only starting from here.

Let us recall some basic fact on pluripotential theory \cite{Demailly}.
Let $\Omega\subset \C^2 $ be an open subset, we say that a function $\varphi : \Omega \to \R\cup\{-\infty\}$ is plurisubharmonic (psh for short) if its restriction to any complex line is subharmonic (a subharmonic function is the limit of a decreasing sequence of $\mathscr{C}^2$ functions with non negative Laplacian). Then, if $\varphi$ is not identically $-\infty$, it is in $L^1_\mathrm{loc}$ so that we can defined $dd^c \varphi$ in the sense of currents. It is a positive closed current of bidegree $(1,1)$. Now for a positive closed current $T$ of bidegree $(1,1)$, if $\varphi$ is locally bounded, we can defined $dd^c \varphi \wedge T := dd^c (\varphi \wedge T)$ in the sense of currents and this is a positive measure.  

When $\Omega =\C^2$, we say that a function $\varphi$ is in \emph{the Lelong class} $\mathcal{L}(\C^2)$ if it is psh and $\varphi \leq \log^+\|z\| +O(1)$ where $\log^+\|z\|= \max (\log\|z\|,0)$, we say that it is in $\mathcal{L}^+(\C^2)$ if in addition $\varphi=\log^+\|z\| +O(1)$. 
Assume that $\varphi\in \mathcal{L}^+(\C^2)$ so that $\varphi$ is locally bounded and we can define $(dd^c \varphi)^2$ which is then a probability measure. We shall be using the following classical fact:
\begin{fact}\label{fact2} \normalfont
Let $K\subset \C^2$ be a compact set such that there exists a psh function $\varphi\in \mathcal{L}^+(\C^2)$ with $ \supp (dd^c \varphi)^2\subset K$ and $\varphi=0$ on $K$, then such $\varphi$ is unique. More precisely, for any $\psi \in \mathcal{L}(\C^2)$ with $\psi \leq 0$ on $K$, then $\psi \leq \varphi$ on $\C^2$ (see e.g. \cite[Proposition 6.14]{favredujardin}). 
\end{fact}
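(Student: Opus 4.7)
My plan is to recognize this as a standard \emph{domination principle} for the complex Monge--Amp\`ere operator in the Lelong class. The uniqueness claim follows from the more precise statement $\psi\le\varphi$ applied with the roles of two candidate $\varphi$'s swapped, so I focus on proving: for any $\psi\in\mathcal{L}(\C^2)$ with $\psi\le 0$ on $K$, one has $\psi\le\varphi$ on all of $\C^2$. A preliminary reduction to $\psi$ locally bounded is performed by replacing $\psi$ with $\psi_N\pe\max(\psi,-N)$, noting that $\psi_N\downarrow\psi$ and $\psi_N$ still lies in $\mathcal{L}(\C^2)$ and is nonpositive on $K$.

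For $\varepsilon\in(0,1)$ set $\psi_\varepsilon\pe(1-\varepsilon)\psi$. Combining $\psi_\varepsilon\le(1-\varepsilon)\log^+\|z\|+C$ with $\varphi\ge\log^+\|z\|-C'$ gives $\psi_\varepsilon-\varphi\le -\varepsilon\log^+\|z\|+(C+C')$, so $\psi_\varepsilon<\varphi$ outside a sufficiently large ball and the set $\Omega_\varepsilon\pe\{\psi_\varepsilon>\varphi\}$ is relatively compact in $\C^2$. On $K$ one has $\psi_\varepsilon\le 0=\varphi$, so $\Omega_\varepsilon\cap K=\emptyset$. Since $\supp(dd^c\varphi)^2\subset K$, this forces $(dd^c\varphi)^2\equiv 0$ on $\Omega_\varepsilon$, i.e.\ $\varphi$ is a \emph{maximal} plurisubharmonic function on $\Omega_\varepsilon$.

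The next step is to invoke the Bedford--Taylor maximum/comparison principle on the bounded open set $\Omega_\varepsilon$: since $\varphi$ is maximal there and $\psi_\varepsilon\le\varphi$ on $\partial\Omega_\varepsilon$ (in the appropriate quasi-continuous sense), one must have $\psi_\varepsilon\le\varphi$ on $\Omega_\varepsilon$. This contradicts the defining inequality of $\Omega_\varepsilon$ unless $\Omega_\varepsilon=\emptyset$, whence $\psi_\varepsilon\le\varphi$ on $\C^2$; letting $\varepsilon\to 0$ yields $\psi\le\varphi$, and then $N\to\infty$ removes the auxiliary truncation.

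The main technical subtlety lies in the boundary step: without a priori continuity of $\psi$ and $\varphi$, the equality on $\partial\Omega_\varepsilon$ and the maximality principle must be interpreted via quasi-continuity of psh functions with respect to the relative Monge--Amp\`ere capacity, or equivalently by a small regularization such as comparing $\max(\psi_\varepsilon+\delta,\varphi)$ with $\varphi$ on $\{\psi_\varepsilon+\delta>\varphi\}$ and letting $\delta\to 0^+$. These are standard pluripotential-theoretic manipulations; once handled, the argument is really just the pluripotential maximum principle combined with the growth gap between $\mathcal{L}$ and $\mathcal{L}^+$ at infinity.
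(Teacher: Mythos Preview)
The paper does not supply a proof of this Fact; it simply records it and cites \cite[Proposition~6.14]{favredujardin}. Your argument is the standard domination/comparison principle proof in the Lelong class and is essentially what one finds in the cited reference, so your approach is correct and aligned with the intended one. One minor remark: once you know $\psi_N\le\varphi$ for a single $N$ you are already done, since $\psi\le\psi_N$ pointwise; the limit $N\to\infty$ is unnecessary.
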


\subsection{Polynomial endomorphisms}\label{sec_endo}
Let $h:\C^2 \to \C^2$ be a polynomial endomorphism of $\P^2(\C)$ of degree $D > 1$: it is a polynomial map of $\C^2$ whose homogeneous extension to $\P^2$ is holomorphic. For such maps, Forn\ae ss and Sibony \cite{FS2} defined and studied the equilibrium measure $\mu_h$ of $h$ where $\mu_h:= (dd^c G_h)^2$ where $G_h(z):= \lim_n D^{-n}\log^+\|h^n(z)\|$ is the (psh) Green function of $f$, $G_h \in \mathcal{L}^+(\C^2)$, and $G_h$ is H\"older continuous (\cite{Sibony}).  It is an invariant and mixing probability measure of entropy $2\log D$. As explained in the introduction, Briend and Duval showed in \cite{briendduval} that it equidistributes the $J_h$-repelling periodic points in the following sense: let $R_n$ denote the set of periodic points of period $n$ which are repelling, then 
\[\mu_h = \lim_n D^{-2n} \sum_{z\in R_n} \delta_z,\]
so $J_h= \supp(\mu_h)$. Furthermore, we have $G_h=0$ on $\supp(\mu_h)$ so by Fact~\ref{fact2}, $G_h$ is the unique psh function in the Lelong class such that $\supp(dd^c G_h)^2 \subset J_h$ and $\varphi=0$ on $J_h$. 
 
We also denote $T_h$ the \emph{Green current} of $h$ which is defined by $\lim_n D^{-n}(h^n)^*(\omega)$, where $\omega$ is the Fubini-Study form on $\P^2$. It is a positive closed current of bidegree $(1,1)$
 and it is the trivial extension of $dd^c G_h$ to $\P^2$ (\cite{Sibony}). 

The following lemma will help us improve a local invariance of a Green function to a global one.
\begin{lemma}\label{lm:preserving} Let $g\geq 0$ be a psh function in $\C^2$ such that $g$ is pluriharmonic on the connected open set $\Omega:= \{g>0\}$. Let $U\subset \Omega$ be a connected non empty open set. Assume  $h^{-1}(\Omega)=\Omega$ and $h^{-1}(U)=U$. Assume also that $g\circ h =D h$ on $U$ then $g\circ h = Dh 
	$ on $\Omega$. 
\end{lemma}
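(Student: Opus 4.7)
The plan is to apply the identity principle for pluriharmonic (equivalently, real-analytic) functions on the connected open set $\Omega$. The equation to be promoted, $g\circ h = D\cdot g$, is an equality between two pluriharmonic functions on $\Omega$, so once I have that identification it suffices to check that they agree on some non-empty open subset, which is exactly the hypothesis on $U$.

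First I would check the pluriharmonicity of both sides. The right-hand side $D\cdot g$ is pluriharmonic on $\Omega$ by assumption. For the left-hand side, the relation $h^{-1}(\Omega)=\Omega$ means in particular that $h(\Omega)\subset \Omega$, so $g\circ h$ is well-defined on $\Omega$. Since $h$ is holomorphic on $\C^2$ and $g$ is pluriharmonic on $\Omega$, the composition $g\circ h$ is pluriharmonic on $\Omega$: locally on $\Omega$ one can write $g=\mathrm{Re}(F)$ for some holomorphic $F$, and then $g\circ h = \mathrm{Re}(F\circ h)$ is the real part of a holomorphic function.

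Next I would form the difference $\varphi := g\circ h - D\cdot g$, which is pluriharmonic on the connected open set $\Omega$, hence real-analytic. By the hypothesis $g\circ h = D\cdot g$ on $U$, the function $\varphi$ vanishes identically on the non-empty open subset $U\subset\Omega$. The identity theorem for real-analytic functions on the connected set $\Omega$ then forces $\varphi\equiv 0$ on all of $\Omega$, which is the desired conclusion.

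There is no real obstacle here; the only subtlety to flag is verifying that $g\circ h$ is pluriharmonic (and not merely psh) on $\Omega$, which rests on the two-sided inclusion packaged in $h^{-1}(\Omega)=\Omega$ together with the holomorphicity of $h$. The invariance $h^{-1}(U)=U$ is actually not needed for this argument: one only uses that $U$ is a non-empty open subset of $\Omega$ on which the functional equation already holds.
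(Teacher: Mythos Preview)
Your argument is correct, and it is considerably more elementary than the paper's proof. The paper proceeds by introducing the level sets $\{g=a\}$ for $a>0$, writing $g=\log|\varphi_B|$ locally on $\Omega$, and noting that the currents $T_a:=dd^c\max(g,a)$ are laminated by the analytic sets $\{\varphi_B=e^{a+i\theta}\}$. From $g\circ h=Dg$ on $U$ it deduces $h^*T_{Da}=DT_a$ there, invokes a result of Dujardin on wedge products of laminar currents to see that $h$ sends leaves to leaves, and then extends this leaf-to-leaf correspondence from $U$ to all of $\Omega$ by analytic continuation of the (global, immersed) leaves.

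Your approach sidesteps this entire apparatus: once one observes that $h(\Omega)\subset\Omega$ (from $h^{-1}(\Omega)=\Omega$) and that $g$ is pluriharmonic on $\Omega$, the difference $g\circ h-Dg$ is pluriharmonic, hence real-analytic, on the connected set $\Omega$, and vanishes on the non-empty open subset $U$; the identity theorem finishes. What the paper's argument buys is a geometric picture (the lamination by level curves of the local holomorphic primitives) that is conceptually tied to how the lemma is later used; what your argument buys is brevity and the avoidance of an external reference. Your side remark that the hypothesis $h^{-1}(U)=U$ is not actually needed is also correct for your proof.
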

\begin{proof}
	For $a>0$, we consider $g_a:=\max(g ,a)$ and $T_a:= dd^c g_a$, then $T_a$ is a positive closed current on $\Omega$, supported on $\{g=a\}$, and $T_a^2= dd^c (g_a \wedge T_a)= 0$. Furthermore, on any ball $B \subset \Omega$, we can write $g= \ln |\varphi_B|$ where $\varphi_B$ is a non vanishing holomorphic function. In particular, on $B$   
	\[ T_a = \int_{0}^{2\pi} [\varphi_B = e^a e^{ i \theta} ] \frac{d\theta}{2\pi},\] 
	where for each $\theta$, $[\varphi_B = e^a e^{ i \theta} ]$ denotes the current of integration on the analytic set $\{\varphi_B = e^a e^{ i \theta} \}$. The analytic sets $\{\varphi_B = e^a e^{ i \theta}\}_{\theta, B}$ are thus local leaves of a global lamination of $\{g=a\}$ by (immersed) Riemann surfaces, possibly singular.   
	
	On $U$, as $g \circ h =D g$ we have that $g_{Da} \circ h =D g_{a}$ so $h^* T_{Da}= D T_a$.
	Take in particular $B\subset U$ such that $h(B)$ is contained in a ball $B'$ (up to reducing $B$). On $B$, we have $h^* T_{Da} \wedge T_a = 0$, so by \cite{Dujardin_laminar}, we deduce that $h$ sends each leaf $\{\varphi_B(y)=\exp(a +i\theta)\}$ to a leaf $\{\varphi_{B'}(y)=\exp(Da +i\theta') \}$. 
	
	Summing up, we have that, in $U$, the lamination of $\{g=a\}$ given by the maps $\varphi_B$ is sent by $h$ to the the lamination of $\{g=Da\}$. By analyticity, the global lamination of $\{g=a\}$ is sent by $h$ to the global lamination of $\{g=Da\}$. So on the whole $\Omega$, we have $g \circ h= Dg$. 
\end{proof}
\subsection{Hénon maps}\label{sec_henon}
Let $f:\C^2\to \C^2$ be a (generalized) H\'enon map: it is a polynomial automorphism of $\C^2$ of degree $d>1$ such that its homogeneous extension (resp. the extension of $f^{-1}$) to $\P^2$ admits an indeterminacy point $I^+$ (resp. $I^-$) with $I^+\neq I^-$. The ergodic study of $f$ was developed by Bedford and Smillie, with Lyubich, in a seminal series of papers \cite{BedfordSmillie1, BedfordSmillie3, BedfordLyubichSmillie}. They notably defined the \emph{equilibrium  measure} $\mu_f$ of $f$ whose construction and properties we briefly recall:
\begin{itemize}
	\item the plurisubharmonic (psh for short) functions $G_f^+(z):= \lim d^{-n} \log^+ \| f^n(z)\|$ and $G_f^-(z):= \lim d^{-n} \log^+ \| f^{-n}(z)\|$ are well defined (where $\log^+ x =\max (\log x, 0)$). In particular, the \emph{Green currents} $T_f^+:= dd^c G_f^+$ and  $T_f^-:= dd^c G_f^-$, taking the $dd^c$ on $\C^2$, are well defined positive closed currents of mass $1$.
	\item The intersection of the supports $J_f^+$ and $J_f^-$ of $T_f^+$ and $T_f^-$ is a compact set in $\C^2$ hence $\mu_f:=T_f^+\wedge T_f^-$ is a well defined probability measure. Alternatively, we can also write $\mu_f:= (dd^c G_f)^2$ where $G_f :=\max(G_f^+, G_f^-)\in  \mathcal{L}^+(\C^2)$ is  H\"older continuous. 
	\item The measure $\mu_f$ is invariant, mixing and hyperbolic (non zero Lyapunov exponent). It has maximal entropy $\log d$ and it equidistributes the saddle periodic points in the following sense: let $S_n$ denote the set of periodic points of period $n$ ($f^n(z)=z$) which are saddle (the eigenvalues of $D(f^n)(z)$ have modulus $\neq 1$), then 
	\[\mu_f = \lim_n d^{-n} \sum_{z\in S_n} \delta_z.\]  
\end{itemize} 
In particular, the set $J_f$ defined in the introduction is equal to $\supp(\mu_f)$ and from $J^\pm_f \subset\{G_f^\pm=0\}$, one deduces $G_f=0$ on $J_f$. Hence, the function $G_f$ is the unique psh function in $\mathcal{L}^+(\C^2)$ such that $\supp (dd^cG_f)^2\subset J_f$ and $G_f=0$ on $J_f$ by Fact~\ref{fact2}.

\subsection{Family of degree \textit{d} polynomials}\label{sec_cubic}
Following Dujardin and Favre \cite{favredujardin}, for $d\geq3$, we can define a good family of degree $d$ polynomials parametrized by $\C^{d-1}$ as follows: for $(x_1,\ldots,x_{d-2},y)\in \C^2$, we consider the polynomial 
\[P_{x,y}(z):=\frac{1}{d}z^d+\sum_{j=2}^{d-1}(-1)^{d-j}\sigma_{d-j}(x)\frac{z^j}{j}+y~, \ z\in\C~,\]
where $\sigma_{\ell}(x)$ is the monic symmetric polynomial of degree $\ell$ in $(x_1,\ldots,x_{d-2})$, so that $(x,y) \mapsto P_{x,y}$ is an orbifold parametrization of the moduli space of degree $d$ polynomials. The \emph{marked critical points} are $c_1(x,y)= 0$ and $c_i(x,y)=x_{i-1}$ for $2\leq i\leq d-1$. We can consider on $\C^{d-1}$ the  \emph{activity current} $T_i$ of $c_i$ for $i=1,2$ which is defined by 
$T_i:=dd^c G_i$ (the $dd^c$ is in the $(x,y)$ variables) where
\[G_i:= \lim_n \frac{1}{d^n} \log^+|P_{x,y}^n(c_i)|.\]   
Then $T_1$ (resp. $T_i$, $2\leq i\leq d-1$) is a positive closed current of mass $1/d$ (resp. $1$) in $\P^{d-1}$ and the psh function
\[G_\bif:=\max(dG_1,G_2,\ldots,G_{d-1})\]
is in $\mathcal{L}^+(\C^{d-1})$ by \cite{favredujardin} with the same definition for $\mathcal{L}^+(\C^{d-1})$ than $\mathcal{L}^+(\C^{2})$ (one also can prove it induces a continuous metrization on $\mathcal{O}_{\mathbb{P}^{d-1}}(1)$, see \cite{favredujardin,favregauthier}). The \emph{bifurcation measure} $\mu_\bif$ is defined by 
\[\mu_\bif:=  (dd^c G_\bif)^{d-1}=d\cdot \bigwedge_{j=1}^{d-1}dd^cG_j. \]
One also has $G_\bif=0$ on $\supp(\mu_\bif)$. In particular, when $d=3$, with the notations of the introduction, we have $S_\bif= \supp(\mu_\bif)$  and $G_\bif$ is the unique psh function in the Lelong class such that $(dd^c G_\bif)^2 \subset S_\bif$ and $G_\bif=0$ on $S_\bif$.

\medskip

A parameter $(x,y)$ is post-critically finite, PCF for short, if for any $1\leq i\leq d-1$, there exist $(n_i,m_i)\in \N \times \N^*$ such that $P_{x,y}^{n_{i}+m_{i}}(c_i(x,y))= P_{x,y}^{n_{i}}(c_i(x,y))$ for all $i$. Let $PCF\subset \C^{d-1}$ denote the set of PCF degree $d$ polynomials.

By the result of Favre and the first author \cite{favregauthier} (see also \cite{Good-height, YZ-adelic}), we have that if one takes a Zariski dense sequence of PCF parameters $(x_k,y_k)_k$, then one can extract a subsequence such that ($\mathrm{Orb}(x_k,y_k)$ is the Galois orbit of $(x_k,y_k)$)
\[\lim_k \frac{1}{ \# \mathrm{Orb}(x_k,y_k)}  \sum_{(x,y) \in \mathrm{Orb}(x_k,y_k) } \delta_{(x,y) } \to \mu_\bif. \] 

\subsection{Restating Theorem~\ref{Support} in term of measures and potentials}
From the uniqueness of the functions $G_h$, $G_\bif$ and $G_f$, we see that the following theorem is in fact equivalent to Theorem~\ref{Support}. 
\begin{theorem}\label{tm_Functions} Let $h:\C^2 \to \C^2$ be a polynomial endomorphism of $\P^2(\C)$ of degree $D > 1$, $f:\C^2\to \C^2$, a  H\'enon map of degree $d>1$. Then:
	\begin{align}
	\mu_h &\neq \mu_\bif \ \mathrm{or \ equivalently} \ G_h\neq G_\bif \label{eq_endo_bif} ;		\\
		\mu_h &\neq \mu_f \ \mathrm{or \ equivalently} \ G_h\neq G_f \label{eq_endo_henon}; 		\\
			\mu_f &\neq \mu_\bif \ \mathrm{or \ equivalently} \ G_f\neq G_\bif \label{eq_henon_bif}. 	
	\end{align}

\end{theorem}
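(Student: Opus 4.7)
The plan is to attack the three inequalities separately, in each case assuming equality of the two Green-type psh functions and deriving a contradiction from a rigidity property of one of the two sides. The common device is to push the equality $G_A = G_B$ through $dd^c$, then to restrict to the open sets where the max-potentials $G_f = \max(G_f^+, G_f^-)$, $G_{\bif} = \max(3G_1, G_2)$ reduce to a single smooth branch, so that the equality transfers to an identity between the building-block Green currents on an open set.

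\textbf{Case (\ref{eq_henon_bif}): $G_f \neq G_{\bif}$.} Suppose for contradiction $G_f = G_{\bif}$. On the open set $\Omega^+ = \{G_f^+ > G_f^-\}$ one has $G_f = G_f^+$, pluriharmonic where positive, and the equality forces $G_{\bif}$ to share the same fine structure: its level sets on $\Omega^+$ must coincide with those of $G_f^+$. Using Lemma~\ref{lm:preserving} (or direct analytic continuation of the laminations of $\{G_f^+ = a\}$ produced in its proof), the equality propagates to identify $T_f^+$ with a convex combination of $3T_1$ and $T_2$ globally. This contradicts the extremality of $T_f^+$ among positive closed $(1,1)$-currents of class $[\omega_{FS}]$ (Fornæss--Sibony~\cite{FS2}), since the bifurcation currents $T_1,T_2$ of a single marked critical point are not extremal and have cohomological masses $\tfrac13$ and $1$, incompatible with any global proportionality.

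\textbf{Case (\ref{eq_endo_henon}): $G_h \neq G_f$.} Assume $G_h = G_f$. Taking $dd^c$ and restricting to $\Omega^+$ gives $T_h|_{\Omega^+} = T_f^+|_{\Omega^+}$. The current $T_f^+$ is strongly laminar, being the transverse integral of stable manifolds of saddle periodic points (Bedford--Lyubich--Smillie~\cite{BedfordLyubichSmillie}). Since laminarity is detected by $T\wedge T = 0$ locally (as in the proof of Lemma~\ref{lm:preserving}), this structure propagates to the full Green current $T_h$, which would then be laminar everywhere on $\C^2$. One then invokes the birational rigidity theorem of Cantat--Xie~\cite{cantat2020birational}: coincidence of the Green currents of a regular polynomial endomorphism of $\P^2$ and of a Hénon map forces a birational conjugacy between the two systems, which is impossible because the topological degrees $D^2 > 1$ and $1$ of $h$ and $f$ disagree.

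\textbf{Case (\ref{eq_endo_bif}): $G_h \neq G_{\bif}$.} This is the main case and, following Luo~\cite{Luo}, the strategy is to exploit an abundance of \emph{local symmetries} of the bifurcation measure. At every transversely pre-repelling PCF cubic parameter, the Tan Lei similarity~\cite{Tan-similarity} between $S_{\bif}$ and the Julia set of an auxiliary PCF polynomial provides a local biholomorphism of $\C^2$ sending $\mu_{\bif}$ to a scalar multiple of itself. Under the assumed equality $G_h = G_{\bif}$, each such local symmetry of $\mu_h$ is propagated by Lemma~\ref{lm:preserving} to a global relation between $G_h$ and $h$. The rigidity statement Proposition~\ref{lm:at-transversely-prerepelling}, a two-dimensional analogue of Levin's symmetry theorem for marked polynomial families, then shows that no polynomial endomorphism of fixed degree $D$ can commute with the infinite collection of local biholomorphisms produced by transversely pre-repelling parameters. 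Theorem~\ref{tm:special} on special one-parameter families is the technical engine here, used to reduce the cubic case to a one-dimensional Luo-style analysis along transverse slices through pre-repelling PCF parameters. I expect the hardest step to be the production of \emph{enough} transversely pre-repelling PCF parameters in the two-dimensional cubic moduli space, together with the fine matching of local models on the Julia side and the bifurcation side; the formalism of positive closed currents in $\mathcal{L}^+(\C^2)$ and the uniqueness result in Fact~\ref{fact2} are what make the transfer of information between measure, current, and potential possible throughout the argument.
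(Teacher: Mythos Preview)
Your outline identifies the right high-level ingredients for each case (extremality of $T_f^+$ for (\ref{eq_henon_bif}), Cantat--Xie for (\ref{eq_endo_henon}), a symmetry/rigidity count for (\ref{eq_endo_bif})), but in each case the actual mechanism differs from what you describe, and several of your steps would not go through as written.

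\textbf{Case (\ref{eq_henon_bif}).} You end with ``$T_f^+$ equals a convex combination of $3T_1$ and $T_2$, contradicting extremality because $T_1,T_2$ are not extremal''. Two problems. First, the argument in the paper actually yields $T_f^+=3T_1$ exactly (not a genuine convex combination): on $U^+=\{G_f^+>G_f^->0\}$ one has $G_f^+=3G_1$, and this identity extends to all of $\C^2$ by analytic continuation through the connected set $\{G_f>0\}$. Second, once you have $T_f^+=3T_1$, extremality alone is not a contradiction; you still need to exhibit something in $\supp(T_1)$ incompatible with $T_f^+$. The paper does this by observing that $\{G_1=0\}$ contains the algebraic curves $C_{n,m}=\{P_{x,y}^{n+m}(c_1)=P_{x,y}^n(c_1)\}$, so $[C_{n,m}]$ is supported on $K_f^+$, hence by extremality $[C_{n,m}]$ is proportional to $T_f^+$, contradicting the continuity of $G_f^+$. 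Your appeal to ``$T_1,T_2$ are not extremal'' is neither proved nor needed.

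\textbf{Case (\ref{eq_endo_henon}).} Your invocation of Cantat--Xie is not what that result says. Their statement is that two regular polynomial endomorphisms of $\P^2$ cannot be conjugated by a H\'enon map. To use it, the paper first shows (via Lemma~\ref{lm:preserving}, much as you suggest) that $G_f^+\circ h=DG_f^+$, and then performs the key construction you are missing: set $h_1:=f\circ h\circ f^{-1}$ and prove, by a degree estimate using $G_f=\log^+\|z\|+O(1)$, that $h_1$ is again a \emph{regular} polynomial endomorphism of degree $D$. Only then does Cantat--Xie apply, yielding the contradiction. Your version (``coincidence of Green currents forces a birational conjugacy, impossible since the topological degrees $D^2$ and $1$ disagree'') is not a correct paraphrase and skips the essential step. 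Also, your claim that laminarity of $T_f^+$ on $\Omega^+$ ``propagates to the full $T_h$'' is unjustified and is not how the paper argues; laminarity enters only in the separate subcase $h^*T_f^+\neq DT_f^+$ inside the proof of Theorem~\ref{tm:K+cannotbeinvariant}.

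\textbf{Case (\ref{eq_endo_bif}).} Here the logic is inverted. Tan Lei similarity is not used directly in the $\C^2$ argument, and Theorem~\ref{tm:special} is a parallel one-variable result, not the engine for this case. The paper's mechanism is: from $G_h=G_\bif$ and Lemma~\ref{lm:preserving} one gets $h^*T_i=DT_i$ (Lemma~\ref{lm:Tbif}), hence $h$ preserves the loci $\{G_i=0\}$ and therefore maps PCF curves to PCF curves and transversely PCF parameters to transversely PCF parameters off $\mathrm{Crit}(h)$. Then the \emph{iterates of $h$ themselves}, restricted to a fixed curve $C=\{c_1$ persistently preperiodic$\}$ through a transversely PCF point $p_0$, furnish $N{+}1$ distinct local biholomorphisms between one-parameter bifurcation measures. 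Corollary~\ref{cor:at-transversely-prerepelling} (from Proposition~\ref{lm:at-transversely-prerepelling}) converts each such biholomorphism into an intertwining $P_{h^n(p_0)}\in\mathrm{Inter}(P_{p_0})$, and Proposition~\ref{prop:symm} bounds $\#\mathrm{Inter}(P_{p_0})\leq N$. So the symmetries are produced \emph{by $h$}, not by Tan Lei, and the bound comes from the intertwining count of \cite{book-unlikely}, not from Theorem~\ref{tm:special}.
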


\section{The strong bifurcation locus is not a Julia set} \label{sec:endo-vs-bif}

In this section, we prove Theorem~\ref{tm:special} and the first point of Theorem~\ref{tm_Functions}.
 To do so, we focus first on local biholomorphisms between bifurcation loci in one-dimensional families of polynomial maps. Note that in \cite{Luo}, PCF parameters are necessarily sent to PCF parameters by such a local biholomorphism since PCF parameters are exactly  branching points of the boundary $\partial \mathcal{M}$ of the Mandelbrot set (which is a topological characterization). This has no reason to be true in general so we work with parameters satisfying some strong Collet-Eckmann conditions for which results of Ji and Xie \cite{JiXie} and Dujardin, Favre and the first author \cite{DFG} help us to promote a local symmetry between the bifurcation measures to the fact that pairs of polynomials are intertwined. We then conclude by the work of Favre and the first author ~\cite{book-unlikely} which states that only finitely pairs are intertwined when we fix the first polynomial.    
 To go to the family of cubic polynomial, we use the fact due to Dujardin and Favre \cite{favredujardin} that a stable one dimensional family of cubic maps is special.

\subsection{Bifurcation versus polynomial: local measurable similarity}

Let $P:\Lambda\times\p^1\to\Lambda\times\p^1$ be an analytic family of polynomials of degree $d\geq2$ parametrized by a holomorphic curve $\Lambda$, i.e. $P$ is holomorphic, and for any $(t,z)\in\Lambda\times \mathbb{P}^1$, we have $P(t,z)=(t,P_t(z))$ and the map $P_t$ is a polynomial of degree $d$ for all $t\in \Lambda$. Let
\[G_{P_t}(z):=\lim_{n\to\infty}\frac{1}{d^n}\log^+|P_t^n(z)|, \quad z\in \mathbb{C},\]
be the Green function of $P_t$, and the convergence is local uniform.
% also
%$\pi_\Lambda:\Lambda\times\p^1\to\Lambda$ and $\pi_{\p^1}:\Lambda\times\p^1\to\p^1$ be the canonical projections onto the first and second coordinate respectively. Denote by $\omega_\Lambda$ a K\"ahler form on $\Lambda$ of mass $1$ and by $\omega_{\p^1}$ the Fubini-Study form on $\p^1$. Let finally $\omega_1:=(\pi_\Lambda)^*(\omega_\Lambda)$ and $\omega_2:=(\pi_{\p^1})^*(\omega_{\p^1})$. The fibered Green current of $f$ is defined as
%\[\widehat{T}_f:=\lim_{n\to\infty}\frac{1}{d^n}(f^n)^*(\omega_2).\]
%Recall that the slice of $\widehat{T}_f$ on $\{t_0\}\times\p^{1}$ is the maximal entropy measure $\mu_{f_{t_0}}$ of $f_{t_0}$ for all $t_0\in\Lambda$.

\medskip

A polynomial holomorphic pair $(P,a)$ of degree $d$ parametrized by $\Lambda$ is the data of a family $P$ of degree $d$ as above parametrized by $\Lambda$ and of a holomorphic map $a:\Lambda\to\mathbb{C}$.
The bifurcation measure $\mu_{P,a}$ of the pair $(P,a)$ is defined by
\[\mu_{f,a}=dd^c_tG_{P_t}(a(t)).\]

\medskip
In the following, we say:
\begin{enumerate}
\item The family $P$ \emph{satisfies condition} CE with exponent $\lambda>1$ at $t_0\in \Lambda$ if there are a constant $C>0$ and an integer $N\geq1$ such that
\[|(P_{t_0}^n)'(P_{t_0}^N(c))|\geq C\cdot \lambda^n,\]
 for any integer $n\geq0$ and any $c\in \mathrm{Crit}(P_{t_0})\cap J(P_{t_0})$ and if any $c\in \mathrm{Crit}(P_{t_0})\setminus J(P_{t_0})$ lies in an attracting basin.
\item The pair $(P,a)$ \emph{satisfies condition} ParCE with exponent $\lambda>1$ at $t_0\in \Lambda$ if there is a constant $C>0$ such that for any integer $n\geq0$,
\[\left|\left.\frac{\partial P_t^n(a(t))}{\partial t}\right|_{t=t_0}\right|\geq C\cdot \lambda^n.\]
\item The pair $(P,a)$ \emph{satisfies condition} PR$(s)$ with exponent $s>1/2$ at $t_0\in \Lambda$ if there is an integer $N\geq1$ such that for any integer $n\geq N$ and any $c\in \mathrm{Crit}(P_{t_0})$,
\[\left|P_{t_0}^n(a(t_0))-c\right|\geq n^{-s}.\]
\end{enumerate}

Given two positive measures $\mu_1$ and $\mu_2$, we write $\mu_1\asymp\mu_2$
if  $c^{-1} \mu_2\leq \mu_1\leq c\mu_2$ for some positive constant 
$c$. Following the terminology of \cite{book-unlikely},   we say that a polynomial $P$ is \emph{integrable} if it is either a Chebychev or a monomial. In this paragraph, we prove the next Proposition.

\begin{proposition}\label{lm:at-transversely-prerepelling}
Let $(P,a)$ and $(Q,b)$ be polynomial holomorphic pairs of degree $d\geq2$, parametrized by $\Lambda_1$ and $\Lambda_2$ respectively. Assume there exists a biholomorphism $\phi:\Lambda_1\to\Lambda_2$ such that $\mu_{P,a} \asymp  \phi^*(\mu_{Q,b})$. Assume in addition that there is $t_0\in \Lambda_1$ such that $P$ and $Q$ satisfy condition \emph{CE} and $(P,a)$ and $(Q,b)$ satisfy condition \emph{ParCE} and \emph{PR}$(s)$ for some $s>1/2$ at $t_0$ and $\phi(t_0)$ respectively and are not special.

Then there exists an irreducible algebraic curve $Z\subset\mathbb{P}^1\times\mathbb{P}^1$ which projects surjectively onto both coordinates such that $Z$ is preperiodic under iteration of $(P_{t_0},Q_{\phi(t_0)})$.
\end{proposition}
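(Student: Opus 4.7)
The plan is to convert the measure comparability $\mu_{P,a}\asymp\phi^*\mu_{Q,b}$ into a local conformal identification between open pieces of the Julia sets $J(P_{t_0})$ and $J(Q_{\phi(t_0)})$, and then to invoke the local-to-global rigidity theorem of Ji--Xie~\cite{JiXie} (building on~\cite{DFG}) to extract an algebraic correspondence preperiodic under the product $(P_{t_0},Q_{\phi(t_0)})$.

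The first step is a parameter-to-dynamics transfer at $t_0$. The ParCE condition with exponent $\lambda>1$ says that $\Phi_N(t):=P_t^N(a(t))$ has a derivative of size at least $C\lambda^N$ at $t_0$, so for every large enough $N$ the map $\Phi_N$ is a biholomorphism from some small disk $\Delta_1\subset\Lambda_1$ around $t_0$ onto its image $U_1\subset\mathbb{C}$. The CE condition on $P$ at $t_0$, combined with PR$(s)$ which forces the orbit of $a(t_0)$ to stay at polynomial distance from $\mathrm{Crit}(P_{t_0})$, provides the uniform distortion control needed to obtain a quantitative two-sided comparison
\[
\mu_{P,a}|_{\Delta_1}\;\asymp\;\Phi_N^{\,*}\bigl(\mu_{P_{t_0}}|_{U_1}\bigr)
\]
(this is the standard transversality/distortion principle used in~\cite{DFG} and~\cite{JiXie}; it exploits the identity $G_{P_t}(a(t))=d^{-N}G_{P_t}(\Phi_N(t))$ together with Hölder control of $G_{P_t}$ in $t$ near the Julia set). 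Running the symmetric construction for $(Q,b)$ at $\phi(t_0)$ produces a biholomorphism $\Psi_M:\Delta_2\to U_2$ with an analogous comparison for $\mu_{Q,b}$. Composing gives a local biholomorphism $h:=\Psi_M\circ\phi\circ\Phi_N^{-1}:U_1\to U_2$ between open subsets of $\mathbb{C}$ which sends $J(P_{t_0})\cap U_1$ onto $J(Q_{\phi(t_0)})\cap U_2$ and satisfies $h_{*}\mu_{P_{t_0}}\asymp\mu_{Q_{\phi(t_0)}}$ on $U_2$. Because the pairs $(P,a)$ and $(Q,b)$ are not special, $P_{t_0}$ and $Q_{\phi(t_0)}$ are both non-integrable, so the local-to-global rigidity theorem of~\cite{JiXie}, extending~\cite{DFG}, applies and promotes $h$ to an algebraic correspondence: there exists an irreducible curve $Z\subset\mathbb{P}^1\times\mathbb{P}^1$ surjective on both factors and preperiodic under $(P_{t_0},Q_{\phi(t_0)})$, which is exactly the desired conclusion.

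The main obstacle is the first step. Upgrading the a~priori weak global comparability $\asymp$ to a genuine local push-forward identification of Julia set measures requires all three dynamical hypotheses to act in concert: ParCE keeps $\Phi_N$ from degenerating as $N\to\infty$, CE gives uniform expansion along critical orbits in the target dynamics, and PR$(s)$ is precisely what is needed to prevent the distortion constants from blowing up when one pulls $\mu_{P_{t_0}}$ back along $\Phi_N$. Once this local match between Julia sets is in hand, the second step is relatively soft, amounting to a direct invocation of the Ji--Xie local-to-global theorem together with the non-integrability consequence of being non-special.
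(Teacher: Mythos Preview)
Your overall architecture is right: transfer the parameter measure comparison to a phase-space comparison between $\mu_{P_{t_0}}$ and $\mu_{Q_{\phi(t_0)}}$, then invoke the Dujardin--Favre--Gauthier rigidity (Theorem~\ref{tm:DFG}). The second step is indeed soft once the first is in hand. The problem is that your first step, as written, does not go through.

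You claim that for a fixed large $N$ one has $\mu_{P,a}|_{\Delta_1}\asymp\Phi_N^{\,*}(\mu_{P_{t_0}}|_{U_1})$, justified by ``H\"older control of $G_{P_t}$ in $t$''. Unwinding, this asks that $dd^c_t\,G_{P_t}(\Phi_N(t))\asymp dd^c_t\,G_{P_{t_0}}(\Phi_N(t))$ on $\Delta_1$. But sup-norm (or H\"older) closeness of potentials does not yield comparability of their Laplacians as measures; indeed the supports of these two measures are a priori different (one sits in the bifurcation locus of the moving family, the other in $\Phi_N^{-1}(J_{P_{t_0}})$). There is no ``standard distortion principle'' giving this $\asymp$ for fixed $N$; what the literature actually proves is a \emph{renormalized weak limit}. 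This is exactly the content of the Ji--Xie similarity theorem (Theorem~\ref{tm:JiXie} here): one rescales the parameter disk by $\rho_n\to0$ and passes to a normal-family limit $h_a$, obtaining $d^{-n_j}\rho_{n_j,a}^*(\mu_{P,a})\to h_a^*(\mu_{P_{0}})$ in the weak sense, not a pointwise $\asymp$ at finite stage.

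Once one uses the correct renormalization statement, a second issue your sketch hides becomes visible: the rescalings $\rho_{n,a}$ for $(P,a)$ and $\rho_{n,b}$ for $(Q,b)$ are a priori unrelated, so one cannot simply compose $\Psi_M\circ\phi\circ\Phi_N^{-1}$. The paper's proof handles this by intersecting the two density-$9/10$ index sets, then using the hypothesis $\mu_{P,a}\asymp\phi^*\mu_{Q,b}$ to show that $\rho_{n_j,a}/\rho_{n_j,b}$ stays bounded (since $h_a^*\mu_{P_0}$ and $h_b^*\mu_{Q_0}$ are non-atomic), and only then obtains $h_b^*(\mu_{Q_0})\asymp h_a^*(\mu_{P_0})$ on $\mathbb{D}$. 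Your fixed-$N$, fixed-$M$ shortcut bypasses this synchronization, which is where the hypothesis $\mu_{P,a}\asymp\phi^*\mu_{Q,b}$ is actually consumed.
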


We also say the pair $(P,a)$ is \emph{transversely prerepelling} at $t_0\in\Lambda$ if there are $k\geq0$ and $p\geq1$ such that $x_0:=P_{t_0}^k(a(t_0))$ is $p$-periodic and repelling and if the graph $\Gamma_1$ of $t\mapsto P_t^k(a(t))$ and the graph $\Gamma_2$ of the analytic continuation of $x_0$ as a repelling periodic point of $f_t$ are smooth and transverse local submanifolds of $\Lambda\times\p^{1}$ at $(t_0,x_0)$.

\medskip

The following is an immediate consequence of Proposition~\ref{lm:at-transversely-prerepelling}.

% (which in this precise case follows easily from \cite[Chapter 4]{book-unlikely} {\red Je comprends pas le sens de la phrase}).

\begin{corollary}\label{cor:at-transversely-prerepelling}
Let $(P,a)$ and $(Q,b)$ be polynomial holomorphic pairs of degree $d\geq2$, parametrized by $\Lambda_1$ and $\Lambda_2$ respectively. Assume there exists a biholomorphism $\phi:\Lambda_1\to\Lambda_2$ and $\alpha>0$ such that $\mu_{f,a}\asymp \phi^*(\mu_{f,b})$. Assume in addition that there is $t_0\in \Lambda_1$ such that $P_{t_0}$ and $Q_{\phi(t_0)}$ are postcritically finite and $(P,a)$ and $(Q,b)$ are transversely prerepelling at $t_0$ and $\phi(t_0)$ respectively.

Then there exists an irreducible algebraic curve $Z\subset\mathbb{C}^2$ which projects surjectively onto both coordinates such that $Z$ is preperiodic under iteration of $(P_{t_0},Q_{\phi(t_0)})$.
\end{corollary}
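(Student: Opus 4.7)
My plan is to deduce Corollary~\ref{cor:at-transversely-prerepelling} directly from Proposition~\ref{lm:at-transversely-prerepelling} by verifying, in the situation where $P_{t_0}$ and $Q_{\phi(t_0)}$ are PCF and the pairs are transversely prerepelling, the technical hypotheses CE, ParCE, PR$(s)$, and non-speciality that the proposition requires. First I would check condition CE at the PCF polynomial $P_{t_0}$: the Fatou set of a PCF polynomial is a union of super-attracting basins, so any $c\in\mathrm{Crit}(P_{t_0})\setminus J(P_{t_0})$ lies in such a basin and the second clause of CE is immediate; any $c\in\mathrm{Crit}(P_{t_0})\cap J(P_{t_0})$ has an orbit landing after finitely many steps on a repelling cycle in $J(P_{t_0})$ (parabolic/Cremer/Siegel cycles in the Julia set are excluded by postcritical finiteness), and the derivative of $P_{t_0}^n$ along that cycle grows geometrically with its multiplier. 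Hence CE holds with any $\lambda$ strictly less than the minimum modulus of the finitely many postcritical multipliers, and the same reasoning applies at $\phi(t_0)$.

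Next I would handle PR$(s)$ and ParCE for the pair $(P,a)$; the case of $(Q,b)$ is symmetric. Setting $x_0:=P_{t_0}^k(a(t_0))$, transverse prerepellingness means $x_0$ lies on a repelling $p$-cycle of $P_{t_0}$ with multiplier $\lambda$, $|\lambda|>1$. Since a repelling cycle cannot contain a critical point, $|P_{t_0}^n(a(t_0))-c|$ is uniformly bounded below by a positive constant for all $n$ past the preperiod of $a(t_0)$ and all $c\in\mathrm{Crit}(P_{t_0})$, so PR$(s)$ holds for every $s>0$. Let $x(t)$ denote the analytic continuation of $x_0$ as a repelling $p$-periodic point of $P_t$. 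Transversality of the graphs of $t\mapsto P_t^k(a(t))$ and $t\mapsto x(t)$ at $t_0$ yields
\[
\alpha:=\left.\partial_t\bigl(P_t^k(a(t))-x(t)\bigr)\right|_{t=t_0}\neq 0.
\]
Differentiating the identities $P_t^{k+np}(a(t))=P_t^{np}(P_t^k(a(t)))$ and $P_t^{np}(x(t))=x(t)$ at $t_0$ and combining them through the chain rule gives
\[
\left.\partial_t P_t^{k+np}(a(t))\right|_{t=t_0}=(P_{t_0}^{np})'(x_0)\cdot \alpha+\left.\partial_t x(t)\right|_{t=t_0},
\]
and since $|(P_{t_0}^{np})'(x_0)|=|\lambda|^{np}\to\infty$, ParCE follows with exponent $|\lambda|^{1/p}>1$ (indices $n$ not of the form $k+np$ are handled via an extra bounded chain-rule factor along the cycle). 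Non-speciality is immediate in this setting, since transverse prerepellingness makes $\mu_{P,a}$ nonzero near $t_0$, so the pair is active and in particular not special.

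Applying Proposition~\ref{lm:at-transversely-prerepelling} then produces an irreducible algebraic curve $Z\subset \P^1\times \P^1$ that projects surjectively onto both factors and is preperiodic under $(P_{t_0},Q_{\phi(t_0)})$. Since $P_{t_0}$ and $Q_{\phi(t_0)}$ are polynomials, the only preperiodic irreducible components of $\P^1\times \P^1\setminus \C^2$ are the two lines at infinity $\{\infty\}\times \P^1$ and $\P^1\times \{\infty\}$; therefore $Z$ is distinct from them, and $Z\cap \C^2$ gives the desired affine curve. The main subtlety I anticipate is the quantitative transfer from geometric transversality to ParCE through the chain-rule identity above, together with matching the precise meaning of ``non-special'' in Proposition~\ref{lm:at-transversely-prerepelling} with the local activity provided by transverse prerepelling; conditions CE and PR$(s)$ are essentially automatic from PCF-ness and from landing on a repelling cycle disjoint from the critical set.
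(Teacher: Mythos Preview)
Your approach is exactly the one the paper intends: it states the corollary as an immediate consequence of Proposition~\ref{lm:at-transversely-prerepelling} without proof, and you supply the routine verification that PCF together with transverse prerepelling yields CE, PR$(s)$, and ParCE. The chain-rule computation for ParCE is correct, as is the passage from $Z\subset\P^1\times\P^1$ to $Z\subset\C^2$ at the end.

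One caveat: your treatment of ``not special'' misreads the hypothesis. In Proposition~\ref{lm:at-transversely-prerepelling} and Theorem~\ref{tm:DFG} this condition is imposed on the individual polynomials $P_{t_0}$ and $Q_{\phi(t_0)}$ and, in view of the definition placed just before Proposition~\ref{lm:at-transversely-prerepelling}, is meant to exclude the \emph{integrable} cases (monomials and Chebychev polynomials), whose Julia sets carry extra symmetries. Activity of the marked pair near $t_0$ does not rule out $P_{t_0}=z^d$, for instance, so your argument via $\mu_{P,a}\neq 0$ does not establish what is needed. The corollary as stated omits this hypothesis, so it is presumably an implicit standing assumption (or to be checked in the applications), but you should be aware that your justification does not cover it.
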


To prove Proposition~\ref{lm:at-transversely-prerepelling}, we rely on the following asymptotic similarity result stated in \cite[Theorem~5.2]{JiXie} which is an asymptotic similarity property between the parameter space at a parameter $t$ and the phase space of $P_t$ in the spirit of \cite{Tan-similarity}:

\begin{theorem}[Ji-Xie]\label{tm:JiXie}
Let $(P,a)$ be a polynomial holomorphic pair parametrized by $\D$ which satisfies assumptions $(1)$, $(2)$ and $(3)$ above at $0\in \D$. There are a set $A\subset \mathbb{N}$ with density at least $9/10$, a sequence $0<\rho_n<1$, such that $\rho_n\to0$, when $n\in A$ tends to infinity such that, if we let
\[h_n:t\in\mathbb{D}\longmapsto P_{\rho_n t}^{n}(a(\rho_nt))\]
the family $\{h_n:\mathbb{D}\to\mathbb{C},n\in A\}$ is normal and all its limits are non-constant holomorphic maps from $h:\mathbb{D}\to\mathbb{C}$ with $h(0)\in J_{P_0}$.
\end{theorem}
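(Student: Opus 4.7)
The plan is to follow the scheme underlying Tan Lei's similarity theorem \cite{Tan-similarity}, adapted to non-postcritically-finite parameters via Collet--Eckmann distortion estimates in the style of Graczyk--Swiatek and subsequent work. Normalize $t_0=0$ and set $\Phi_n(t):=P_t^n(a(t))$, so that $h_n(t)=\Phi_n(\rho_n t)$. By condition \emph{ParCE} the parametric derivatives $D_n:=\Phi_n'(0)$ grow at least like $\lambda^n$, so the natural choice is
\[\rho_n:=\frac{1}{|D_n|},\]
which gives $\rho_n\to 0$ and $|h_n'(0)|=|\rho_n D_n|=1$ for every $n$.

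The heart of the argument is to prove that, along a set $A\subset\mathbb{N}$ of lower density at least $9/10$, the map $\Phi_n$ has uniformly bounded distortion on $\rho_n\cdot\D$. Working first in the dynamical plane, I would use condition \emph{CE} together with the polynomial recurrence bound \emph{PR}$(s)$, $s>1/2$, to run the classical shrinking-neighborhoods construction along the orbit $(P_0^k(a(0)))_{k\geq 0}$: one inflates a fixed ball around $P_0^n(a(0))$ backwards by univalent branches of iterates of $P_0$, with distortion controlled by Koebe's theorem as long as the returns to neighborhoods of $\mathrm{Crit}(P_0)$ are not too deep. The threshold $s>1/2$ in \emph{PR}$(s)$ is precisely what makes the averaged sum $\sum_k\log d(P_0^k(a(0)),\mathrm{Crit}(P_0))^{-1}$ controllable, and a Chebyshev-type counting of ``bad'' iterates shows that the set of good $n$'s can be made of density arbitrarily close to $1$, in particular $\geq 9/10$.

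To promote this dynamical control to the parameter space, use the telescoping identity
\[\Phi_n'(0)=(P_0^n)'(a(0))\cdot a'(0)+\sum_{k=0}^{n-1}(P_0^{n-1-k})'\bigl(P_0^{k+1}(a(0))\bigr)\cdot\partial_t P_t(P_0^k(a(0)))\big|_{t=0},\]
combined with the Collet--Eckmann expansion, to see that the dominant term forces $\Phi_n$ to be, up to bounded multiplicative distortion on $\{|t|<\rho_n\}$, the affine map $t\mapsto P_0^n(a(0))+D_n\cdot t$. Rescaling by $\rho_n$, this shows that on any fixed compact $K\subset\D$ the sequence $\{h_n\}_{n\in A}$ is uniformly bounded (the centers $P_0^n(a(0))$ lie in the bounded set $J_{P_0}$ and $|D_n\rho_n|=1$), hence normal. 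The derivative normalization $|h_n'(0)|=1$ passes to any normal limit, so every limit $h$ satisfies $|h'(0)|=1$ and is non-constant. Finally $a(0)\in J_{P_0}$ (otherwise condition \emph{CE} would place $a(0)$ in an attracting basin, making $\{\Phi_n\}$ normal near $0$ and contradicting \emph{ParCE}), so $h_n(0)=P_0^n(a(0))\in J_{P_0}$ for all $n$, and the closedness of $J_{P_0}$ yields $h(0)\in J_{P_0}$.

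The main obstacle is the quantitative distortion estimate of the second paragraph: the exponent $s>1/2$ is exactly the threshold below which Koebe control along a CE orbit with polynomial recurrence can fail, and the density loss of $1/10$ encodes the rare ``deep returns'' of the orbit of $a(0)$ to $\mathrm{Crit}(P_0)$ where the naive shrinking-neighborhood bound breaks. Transferring the estimate from the dynamical plane to the parameter space through the telescoping sum further requires that the perturbation scale $\rho_n=1/|D_n|$ be small enough not to amplify the dynamical distortion when one passes from $P_0$ to $P_{\rho_n t}$, which is exactly the reason for this specific choice of rescaling.
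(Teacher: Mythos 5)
The paper does not actually prove this statement: it is imported verbatim as \cite[Theorem~5.2]{JiXie} and used as a black box in the proof of Proposition~\ref{lm:at-transversely-prerepelling}, so there is no in-text argument to compare yours against. As a reconstruction of Ji--Xie's argument, you have identified natural ingredients --- the rescaling $\rho_n=1/|\Phi_n'(0)|$ so that $|h_n'(0)|=1$, the telescoping identity relating $\Phi_n'(0)$ to $(P_0^n)'$, Koebe/shrinking-neighborhoods distortion along a Collet--Eckmann orbit, a counting of deep returns to extract a positive-density set $A$. But everything that makes the theorem non-trivial is deferred: the claim that $\Phi_n$ has bounded distortion on $\{|t|<\rho_n\}$ uniformly over a set of $n$'s of density $\geq 9/10$ \emph{is} the quantitative content of the cited result, and neither the shrinking-neighborhood construction compatible with $\mathrm{PR}(s)$ for $s>1/2$, nor the transfer of the dynamical distortion estimate to the parametric one through the telescoping sum (which requires controlling the full sum for $t\neq 0$, not only the derivative at $t=0$), nor the ``Chebyshev-type counting'' yielding the density $9/10$ is actually carried out. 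Invoking ``classical'' Graczyk--Swiatek-style estimates by name is not a proof of this theorem; it restates it.

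There is also a concrete logical gap at the end. Condition CE constrains \emph{critical} points, not $a(0)$, so it does not place $a(0)$ in an attracting basin if $a(0)\notin J_{P_0}$. Worse, if $a(0)$ lies in the basin of infinity, the parametric derivatives $\Phi_n'(0)$ grow super-exponentially (the orbit escapes, and $|P_0'(z)|\gtrsim |z|^{d-1}$ for $|z|$ large), so $\mathrm{ParCE}$ alone gives no contradiction --- yet in that case $h_n(0)=P_0^n(a(0))\to\infty$ and the family $\{h_n\}$ would not even be locally bounded. The escaping case must be excluded by a separate argument (for instance because it would make $t\mapsto G_{P_t}(a(t))$ pluriharmonic near $0$, or because Ji--Xie's hypotheses include a boundedness assumption that your paraphrase drops), but as written your justification that $a(0)\in J_{P_0}$ conflates a statement about critical points with one about $a(0)$ and does not address escape at all.
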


We also rely on the following result~\cite[Theorem~A]{DFG}. 
\begin{theorem}[Dujardin-Favre-Gauthier]\label{tm:DFG}
Let $P$ and $Q$ be degree $d$ polynomials which satisfy condition $\mathrm{CE}$ and are not special. Assume there are $U\subset\mathbb{C}$ with $U\cap J_Q\neq\varnothing$ and $\sigma:U\to \mathbb{C}$ such that $\sigma^*(\mu_Q)\asymp \mu_P$ as measures on $U$. Then there exists an irreducible algebraic curve $Z\subset\mathbb{C}^2$ such that $Z$ is preperiodic under $(P,Q)$ and such that the graph of $\sigma$ is a local branch of $Z$. In particular, $Z$ projects surjectively onto both coordinates.
\end{theorem}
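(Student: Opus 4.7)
The plan is to turn the weak measure-theoretic hypothesis $\sigma^{\ast}\mu_{Q}\asymp\mu_{P}$ into an honest semiconjugacy $\sigma\circ P=Q\circ \sigma$ on $J_{P}\cap U$, and then to use backward iteration at a repelling periodic point of $Q$ inside $\sigma(U)$ to analytically continue the graph of $\sigma$ into an algebraic curve preperiodic under $(P,Q)$. The pivot between the two phases is the rigidity of the equilibrium measure $\mu_P$ for a Collet--Eckmann, non-special polynomial.

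First I would upgrade the two-sided bound $c^{-1}\mu_{P}\le \sigma^{\ast}\mu_{Q}\le c\mu_{P}$ to the exact equality $\sigma^{\ast}\mu_{Q}=\mu_{P}$ on some $U'\subset U$ meeting $J_P$. The measures are dynamically self-similar, $\mu_P=d^{-n}(P^n)^{\ast}\mu_P$ and $\mu_Q=d^{-n}(Q^n)^{\ast}\mu_Q$, so at a Lyapunov-regular point $z_0\in J_P\cap U$ the CE condition yields controlled distortion for $P^n$ on small balls around $z_0$ and, through the comparability, also for $Q^n$ on small balls around $\sigma(z_0)$. Letting $n\to\infty$ reads off the local scaling exponents of $\mu_P$ and $\sigma^{\ast}\mu_Q$ as the Lyapunov exponents of $\mu_P$ and $\mu_Q$. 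Because $P$ and $Q$ are not special, Przytycki--Zdunik-type rigidity pins these exponents down and forces the Radon--Nikodym derivative $d(\sigma^{\ast}\mu_Q)/d\mu_P$ to be a positive constant on $U'$; matching total masses via iteration fixes that constant at $1$. Taking potentials then gives $\sigma^{\ast}G_Q-G_P=h$ harmonic on $U'$, and iterating the identities $G_P\circ P=d\,G_P$, $G_Q\circ Q=d\,G_Q$ along $P^n$-preimages of a point of $J_P\cap U'$ constrains $h$ to stay bounded while its $P^n$-translates would otherwise blow up like $d^n$, whence $h\equiv 0$.

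With $\sigma^{\ast}G_Q=G_P$ in hand, the equality $G_Q(\sigma(P z))=G_Q(Q(\sigma z))$ places $\sigma(Pz)$ and $Q(\sigma z)$ on the same equipotential of $Q$ for every $z\in U'$. Using CE and B\"ottcher-type linearization at the repelling periodic points of $Q$ lying in $\sigma(J_P\cap U')$ --- which are dense by Briend--Duval equidistribution of repelling cycles by $\mu_Q$ --- one promotes ``same level set'' to equality, producing $\sigma\circ P=Q\circ \sigma$ on $J_P\cap U'$. Pick a repelling $n$-periodic point $w_0=Q^n(w_0)\in \sigma(J_P\cap U')$ and a corresponding $z_0\in\sigma^{-1}(w_0)\cap J_P$; compatible inverse branches of $P^n$ and $Q^n$ at $(z_0,w_0)$ allow one to continue $\sigma$ analytically along every $P^n$-backward orbit from $z_0$. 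The resulting family of graphs in $\C^2$ is $(P^n,Q^n)$-periodic and has locally finite area by uniform contraction of the inverse branches, so its closure is a complex analytic curve, whose Zariski closure $Z$ is an algebraic curve in $\C^2$ preperiodic under $(P,Q)$. The original graph of $\sigma$ is a local branch of $Z$ by construction, and non-specialness rules out $Z$ being a horizontal or vertical line, forcing surjectivity onto both factors.

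The main obstacle is unmistakably the first step: passing from $\asymp$ to $=$. The subsequent cascade uses only soft complex-dynamical tools once the two measures coincide, but the exact equality is \emph{not} a pluripotential-theoretic consequence of the asymptotic bound on its own --- one genuinely needs the Przytycki--Zdunik dimension/Lyapunov rigidity available in the non-special CE regime. Without the exclusion of Chebyshev and monomial polynomials, the multiplicative defect $d(\sigma^{\ast}\mu_Q)/d\mu_P$ can be a genuine non-constant density (witness $z\mapsto z^d$, whose equilibrium measure is arc-length on the unit circle and admits many self-similar local deformations), and the strategy would collapse at its very first step.
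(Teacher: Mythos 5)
This statement is cited verbatim from \cite{DFG} (Theorem~A there); the paper under review does not supply a proof, so there is nothing internal to compare against and your argument has to stand on its own. It does not.

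The central gap is the passage from ``same equipotential'' to the semiconjugacy $\sigma\circ P=Q\circ\sigma$ on $J_P\cap U'$. The identity $G_Q(\sigma(Pz))=G_Q(Q(\sigma z))$, which follows from $\sigma^*G_Q=G_P$, only asserts that $\sigma(Pz)$ and $Q(\sigma z)$ lie on the same level set of $G_Q$; but on $J_P\cap U'$ that level set is $\{G_Q=0\}$, i.e.\ the whole filled Julia set of $Q$, so the statement has no content exactly where you need it. Nor is there a ``B\"ottcher-type linearization'' at a repelling periodic point: B\"ottcher coordinates exist in the super-attracting basin of infinity, while the Koenigs linearizer at a repelling cycle carries no canonical angular coordinate that a measure comparison would pin down. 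Deriving a genuine semiconjugacy between $P$ and $Q$ on their Julia sets from a local $\asymp$ between equilibrium measures is essentially as hard as the theorem itself; the missing ingredient is Levin's rigidity of local symmetries of Julia sets together with the intertwining/correspondence machinery, which is precisely what the argument in \cite{DFG} runs through. Once you write ``$\sigma\circ P=Q\circ\sigma$'' you have already used the conclusion.

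There is also an earlier soft spot: the upgrade $\sigma^*\mu_Q\asymp\mu_P\Rightarrow\sigma^*\mu_Q=\mu_P$. A two-sided bound $c^{-1}\mu_P\le\sigma^*\mu_Q\le c\,\mu_P$ does not force the Radon--Nikodym derivative to be constant, and Przytycki--Zdunik rigidity compares $\dim(\mu_P)$ with $\dim_H(J_P)$ for a \emph{single} map --- it says nothing about the density of $\mu_P$ against the pullback of $\mu_Q$ under an arbitrary holomorphic germ $\sigma$. Indeed the DFG conclusion is an algebraic correspondence, not a measure identity, and is compatible with the Jacobian of $\sigma$ genuinely varying; the proof must work directly with $\asymp$ rather than normalize it away. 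Your instinct that non-integrability is essential is correct (and you illustrate it well with $z\mapsto z^d$), but the mechanism by which it is exploited is not the density-constancy step you propose.
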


\begin{proof}[Proof of Proposition~\ref{lm:at-transversely-prerepelling}]
Take local charts in $\Lambda_1$ and $\Lambda_2$ such that in these charts, $t_0=0$ and $\phi(t_0)=0$ and $\phi$ induces a biholomorphism $\psi:\D\to\D$ with $\psi(0)=0$.
By Theorem~\ref{tm:JiXie}, there are sets $A_a,A_b\subset \mathbb{N}$ with densities at least $9/10$ and sequences $(\rho_{n,a})_{n\in A_a}$ and $(\rho_{n,b})_{n\in A_b}$ such that the associated families $\{h_{n,a}\}_{n\in A_a}$ and $\{h_{n,b}\}_{n\in A_b}$ are normal families with non-constant limits.
Let $A:=A_a\cap A_b$. Then $A$ has density at least $8/10>0$. The families $\{h_{n,a}\}_{n\in A}$ and $\{h_{n,b}\}_{n\in A}$ are normal families with non-constant limits. We thus can choose $n_j\to\infty$ such that $h_{n_j,a}\to h_a$ and $h_{n_j,b}\to h_b$ uniformly locally on $\mathbb{D}$ and $h_a,h_b:\D\to\C$ are non-constant and $h_a(0)\in J_{P_0}$ and $h_b(0)\in J_{Q_0}$. By construction of $h_{n,a}$ and the invariance $G_{P_t}(P_t(z))= dG_{P_t}(z)$, we have
\begin{align*}
d^{-n_j}\rho_{n_j,a}^*(\mu_{P,a}) & =d^{-n_j}dd^c\left(G_{P_{\rho_{n_j,a}t}}(a(\rho_{n_j,a}t)\right)=dd^c\left(G_{P_{\rho_{n_j,a}t}}(P_{\rho_{n_j,a}t}^{n_j}(a(\rho_{n_j,a}t))\right)\\
 & =dd^c\left(G_{P_{\rho_{n_j,a}t}}(h_{n_j,a}(t))\right)\longrightarrow_{j\to+\infty} dd^cG_{P_0}\circ h_a=h_a^*(\mu_{P_0})
\end{align*}
and $h_a(0)\in\mathrm{supp}(\mu_{P_0})$. Similarly, $d^{-n_j}\rho_{n_j,b}^*(\mu_{Q,b}) \to h_b^*(\mu_{Q_0})$ as $j\to+\infty$ and $h_b(0)\in\mathrm{supp}(\mu_{Q_0})$. As $\psi^*(\mu_{Q,b})\asymp \mu_{P,a}$, we deduce similarly that, for any weak limit of the sequence of positive measure $d^{-n_j}\rho_{n_j,a}^*(\psi'(0)^*\mu_{Q,b})$
\begin{align*}
\lim d^{-n_j}\rho_{n_j,a}^*(\psi'(0)^*\mu_{Q,b}) \asymp h_a^*(\mu_{P_0}).
\end{align*}
In particular, 
\begin{align*}
\lim \frac{\rho_{n_j,a}}{\rho_{n_j,b}}	 d^{-n_j}\rho_{n_j,b}^*(\psi'(0)^*\mu_{Q,b}) \asymp h_a^*(\mu_{P_0}).
\end{align*}

As $h_a^*(\mu_{P_0})$ and $h_b^*(\mu_{Q_0})$ don't give mass to points, this implies that $(\log(\rho_{n_j,a}/\rho_{n_j,b}))_j$ is a bounded sequence in $\mathbb{R}$. Up to extraction, we thus can assume it converges and up to rescalling, we can assume $\rho_{n_j,a}/\rho_{n_j,b}\to1$ as $j\to\infty$.
We thus have $h_b^*(\mu_{Q_0})\asymp h_a^*(\mu_{P_0})$ as measures on $\D$. 

Since $h_a$ and $h_b$ are non-constant, we can find an open subset $V\subset\D$ such that $h_b:V\to\C$ is injective and $J_{Q_0}\cap h_b(V)\neq \varnothing$. Let $U:=h_b(V)$ and set
\[\sigma:=h_b\circ (h_a|_V)^{-1}:U\longrightarrow \C.\]
As $h_b^*(\mu_{Q_0})\asymp h_a^*(\mu_{P_0})$ as measures on $\D$, we get $\sigma^*(\mu_{P_0})\asymp\mu_{Q_0}$ as measures on $U$.
Since $P_0$ and $Q_0$ satisfy $\mathrm{CE}$, we conclude using Theorem~\ref{tm:DFG}.
\end{proof}

\subsection{Local symmetries of the bifurcation measure of a special curve}
Let $C\subset\mathbb{C}^{d-1}$ be an irreducible algebraic curve that parameterizes a family of polynomials of degree $d$. We say that $C$ is \emph{special} if $C\cap PCF$ is an infinite subset of $C$. The \emph{bifurcation measure of the curve} $C$ is
\[\mu_{\bif,C}:=dd^c(G_\mathrm{bif}|_C).\]

\begin{definition}
A \emph{symmetry} of the measure $\mu_{\bif,C}$ on an open set $U\subset C$ is a non-constant holomorphic map $\phi:U\to C$ such that $\phi^*(\mu_{\bif,C})\asymp  \mu_{\bif,C}$ on $U$.
\end{definition}

We prove here the following which is an analogue to the main result of \cite{Luo}.

\begin{theorem}\label{tm-local-sym-special}
There is a constant $N\geq1$ depending only on $d$ such that for any special curve and any open $U\subset C$ set with $U\cap \mathrm{supp}(\mu_{\bif,C})\neq\varnothing$, there are at most $N$distinct symmetries of $\mu_{\bif,C}$ on $U$.
\end{theorem}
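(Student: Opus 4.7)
The plan is to reduce a symmetry $\phi$ of $\mu_{\bif,C}$ to a local similarity between activity measures of marked critical points of a pair, then apply Proposition~\ref{lm:at-transversely-prerepelling} and the finiteness (from \cite{book-unlikely}) of algebraic correspondences preperiodic under a fixed non-integrable polynomial.

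First, I would select a base parameter $t_0\in U\cap\mathrm{supp}(\mu_{\bif,C})$ such that $P_{t_0}$ is non-integrable and satisfies condition CE, and such that for every marked critical point $c_k$ active at $t_0$ the pair $(P,c_k)$ satisfies ParCE and PR$(s)$ at $t_0$ for some $s>1/2$. Such parameters should be $\mu_{\bif,C}$-generic, hence dense in $U\cap\mathrm{supp}(\mu_{\bif,C})$, via a Benedicks--Carleson / Graczyk--\'{S}wi\k{a}tek style construction adapted to the parameter space. Since $G_\bif|_C=\max(dG_1|_C,G_2|_C,\ldots,G_{d-1}|_C)$, after shrinking $U$ around $t_0$ we may assume a single $G_i|_C$ strictly dominates on $U$, so that $\mu_{\bif,C}$ is a positive multiple of the activity measure $\mu_{i,C}:=dd^c(G_i|_C)$ there. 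For any local symmetry $\phi$, the same procedure applied at $\phi(t_0)$ gives a dominating $G_j|_C$ on a neighbourhood of $\phi(t_0)$, and the relation $\phi^*(\mu_{\bif,C})\asymp \mu_{\bif,C}$ reads, after further shrinking, as $\phi^*(\mu_{j,C})\asymp \mu_{i,C}$.

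Applying Proposition~\ref{lm:at-transversely-prerepelling} to the pairs $(P,c_i)$ on $U$ and $(P,c_j)$ on $\phi(U)$ produces an irreducible algebraic curve $Z_\phi\subset\mathbb{P}^1\times\mathbb{P}^1$ projecting surjectively to both factors and preperiodic under $(P_{t_0},P_{\phi(t_0)})$. Since $P_{t_0}$ is not integrable, \cite{book-unlikely} provides a uniform bound, depending only on $d$, on the number of degree-$d$ polynomials $Q$ admitting such a correspondence with $P_{t_0}$. Combined with the $\leq (d-1)^2$ choices of indices $(i,j)$ and the bounded degree of the natural map from $C$ to the moduli space of degree-$d$ polynomials, this bounds the number of admissible values of $\phi(t_0)\in C$ by a constant depending only on $d$. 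Once $\phi(t_0)$ is prescribed, two symmetries agreeing there differ by a local symmetry of $\mu_{j,C}$ fixing $\phi(t_0)$; under the CE/ParCE/PR conditions in force, Ji--Xie's asymptotic similarity Theorem~\ref{tm:JiXie} together with the rigidity of \cite{DFG} rigidifies such local symmetries to a finite list whose cardinality depends only on $d$.

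The main obstacle is Step~1: producing a $\mu_{\bif,C}$-generic $t_0$ at which \emph{all} active marked critical points simultaneously satisfy the CE, ParCE and PR$(s)$ conditions on a one-dimensional special slice of a priori uncontrolled geometry. A secondary difficulty is to keep every bound truly uniform in $d$ and independent of $C$: this requires the counts in \cite{book-unlikely} to be quoted in a version whose dependence on the fixed polynomial $P_{t_0}$ passes only through its degree.
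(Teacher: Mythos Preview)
Your overall strategy---reduce a symmetry to a local comparison of activity measures, apply Proposition~\ref{lm:at-transversely-prerepelling}, then invoke the finiteness of intertwined polynomials from~\cite{book-unlikely}---is exactly the paper's. But Step~2 contains a genuine error, and the gaps you flag in Steps~1 and~3 are actually resolved by ingredients you do not mention.

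\medskip

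\textbf{The gap in Step~2.} You claim that after shrinking $U$ around $t_0\in\mathrm{supp}(\mu_{\bif,C})$ a single $G_i|_C$ strictly dominates. This is false in general: if $G_\bif|_C(t_0)=0$ (which holds at every PCF parameter, hence on a dense subset of the support) then \emph{all} the $G_i|_C$ vanish at $t_0$; and where $G_\bif|_C(t_0)>0$ with a single $G_i$ strictly dominating nearby, that $G_i$ is harmonic there, so $t_0\notin\mathrm{supp}(\mu_{\bif,C})$. The paper avoids this entirely via the structure theorem for special curves \cite[Theorem~8.1]{book-unlikely}: on a special curve $C$ the set $\mathsf{P}$ of persistently preperiodic critical indices has $\#\mathsf{P}<d-1$, and for every $j\notin\mathsf{P}$ one has $G_j|_C=\alpha_j\cdot G_\bif|_C$ \emph{globally} for some $\alpha_j>0$. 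Thus $\mu_{\bif,C}$ is globally proportional to $\mu_{P,c_i}$ for any active $i$, no local domination is needed, and the factor $(d-1)^2$ for index choices disappears.

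\medskip

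\textbf{Your Step~1 obstacle.} The required genericity is not a Benedicks--Carleson construction to be redone: \cite[Theorem~A]{dTGV} and \cite[Theorem~4.6]{JiXie} give directly that for $\mu_{P,c_i}$-almost every $t\in C$ the map $P_t$ satisfies $\mathrm{CE}$ and each active pair $(P,c_j)$ satisfies $\mathrm{ParCE}$ and $\mathrm{PR}(s)$. Since $\sigma^*(\mu_{P,c_i})\asymp\mu_{P,c_i}$, the set of $t_0$ for which both $t_0$ and $\sigma(t_0)$ lie in this full-measure set is itself of full measure, hence uncountable.

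\medskip

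\textbf{Your Step~3 is unnecessary.} Given $N+1$ distinct symmetries $\sigma_1,\dots,\sigma_{N+1}$, intersect the corresponding full-measure sets of good $t_0$'s and discard the countable set where two of the $\sigma_k$ agree. At any remaining $t_0$ the values $\sigma_1(t_0),\dots,\sigma_{N+1}(t_0)$ are $N+1$ distinct elements of $\mathrm{Inter}(P_{t_0})$, contradicting Proposition~\ref{prop:symm}. No rigidification of symmetries fixing a point is needed, and no reference to the degree of a map from $C$ to moduli space enters (Proposition~\ref{prop:symm} already counts parameters in $\mathbb{C}^{d-1}$).
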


For any $t\in \mathbb{C}^{d-1}$, denote by $\mathrm{Inter}(P_t)$ the set of parameters $s\in \mathbb{C}^{d-1}$ for which there is an irreducible algebraic curve $Z\subset\mathbb{C}^{2}$ which is $(P_t,P_s)$-preperiodic and which projects surjectively onto both coordinates. For the proofs of \eqref{eq_endo_bif} in Theorem~\ref{tm_Functions} and of Theorem~\ref{tm-local-sym-special}, we rely on the result below. 

\begin{proposition}\label{prop:symm}
 For any $d\geq3$, there is a constant $N\geq 1$ depending only on $d$ such that for any $t\in \mathbb{C}^{d-1}$, the set $\mathrm{Inter}(P_t)$
is finite and has cardinality at most $N$.
\end{proposition}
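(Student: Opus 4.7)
The plan is to exploit the dynamical rigidity of intertwined polynomial pairs, as developed in~\cite{book-unlikely}. Recall that $s\in \mathrm{Inter}(P_t)$ means exactly that $(P_t,P_s)$ admits a common preperiodic irreducible algebraic curve with surjective projections on both factors; we then say that $P_t$ and $P_s$ are \emph{intertwined}. The strategy is to pass to the moduli space of degree $d$ polynomials, establish a uniform finiteness there, and lift back to $\C^{d-1}$.

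The first step is to observe that the parametrization $(x,y)\in\C^{d-1}\mapsto P_{x,y}$ is an orbifold parametrization of the moduli space of degree $d$ polynomials. In particular, the induced map into moduli is finite-to-one with fibers of cardinality bounded by a constant $C(d)$ depending only on $d$ (the product of the affine conjugation group acting on monic centered polynomials with the permutations of the markings of the critical points). It therefore suffices to bound uniformly the cardinality of the image of $\mathrm{Inter}(P_t)$ in moduli.

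Next, I would split the argument according to whether $P_t$ is integrable (conjugate to a Chebyshev polynomial or to $z\mapsto z^d$). In the non-integrable case, the main finiteness theorem of~\cite{book-unlikely}---a Medvedev-Scanlon type rigidity combined with constructibility of the intertwining locus in the product of moduli spaces---provides a bound depending only on $d$ for the number of conjugacy classes of degree $d$ polynomials that are intertwined with $P_t$. In the integrable case, the degree $d$ integrable polynomials form finitely many conjugacy classes (one Chebyshev, one monomial); moreover, by the classification, any polynomial admitting an intertwining curve with an integrable one of degree $d$ is again integrable of degree $d$, up to affine conjugacy. Hence the image of $\mathrm{Inter}(P_t)$ in moduli is uniformly bounded in this case as well.

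The main obstacle is the uniformity of the bound for non-integrable $P_t$: a priori the number of intertwining classes could blow up as $[P_t]$ varies. This uniformity ultimately relies on the fact that the locus of intertwined pairs in the product of moduli spaces decomposes as a countable union of algebraic subvarieties indexed by the combinatorial data of the intertwining curve (degree, period, preperiod), and that the non-integrable slice meets only finitely many of these components, each with fibers of uniformly bounded cardinality. I would invoke this as the relevant theorem of~\cite{book-unlikely} rather than reprove it, since the hard work---the constructibility and the Medvedev-Scanlon style dichotomy---is carried out there in full generality.
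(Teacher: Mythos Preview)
Your proposal is correct and follows essentially the same route as the paper: both reduce to the uniform finiteness of intertwining classes established in~\cite{book-unlikely} and then pull back along the finite-to-one parametrization $\C^{d-1}\to\mathrm{Poly}_d$. The paper is slightly more direct in that it invokes a single result (\cite[Theorem~3.46]{book-unlikely}) covering all $P_t$ at once, whereas you split off the integrable case separately; this split is harmless but unnecessary.
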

This result is actually a quite straightforward consequence of \cite{book-unlikely} as we 
 briefly explain now:
\begin{proof}
 We say degree $d$ polynomials $P$ and $Q$ are intertwined if there is a (possibly reducible) algebraic curve $Z\subset \mathbb{C}^2$ that is invariant under $(P,Q)$ which projects surjectively onto both coordinates. Being intertwined is an equivalence relation and, given $t$ $\mathrm{Inter}(P_t)$ is the equivalence class of $P_t$(see~\cite[Chapter~4]{book-unlikely}).

It follows immediately from~\cite[Theorem 3.46]{book-unlikely} that there is a constant $N\geq1$ depending only on $d$ such that for any $t\in \mathbb{C}^{d-1}$, the set of monic and centered degree $d$ polynomials $Q$ such that $P_t$ and $Q$ are intertwined is finite and contains at most $N$ elements. As a cubic monic centered polynomial is conjugated to at most $d-1$ polynomials of the form $P_s$, we have
\[\#\mathrm{Inter}(P_t)=\{s\in \mathbb{C}^{d-1}\; ; \ P_t \ \text{and} \ P_s \ \text{are intertwined}\}\leq N(d-1).\]
This concludes the proof.
\end{proof}

\begin{proof}[Proof of Theorem~\ref{tm-local-sym-special}]
Pick an open set $U\subset C$ and assume there is $\sigma:U\to C$ holomorphic and non-constant such that $\sigma^*(\mu_{\bif, C})\asymp \mu_{\bif,C}$ as measures on $U$.
 According to~\cite[Theorem~8.1]{book-unlikely}, since $C$ is a special curve, if we let $\mathsf{P}:=\{1\leq i\leq d-1,\; ; \ G_i|_C\equiv0\}$, then $\# \mathsf{P}<d-1$ and
\begin{enumerate}
\item for any $i\in \mathsf{P}$, there are integers $n_i\geq0$ and $m_i\geq1$ such that $P_t^{n_i}(c_i(t))=P_t^{n_i+m_i}(c_i(t))$ for any $t\in C$,
\item for any $j\notin\mathsf{P}$, there is $\alpha_j>0$ such that $G_j=\alpha_j\cdot G_\bif$ on $C$, the set $\{t\in C\; ; \ c_j(t)$ is preperiodic for for $P_t\}$ coincides with $PCF\cap C$.
\end{enumerate}
Whence we can pick $i$ so that $\mu_{\bif,C}$ is proportional to $\mu_{P,c_i}$. In particular, our assumption that $\sigma^*(\mu_{\bif, C}) \asymp \mu_{\bif,C}$ as measures on $U$ translates as
\[\sigma^*(\mu_{P,c_i})\asymp \mu_{P,c_i}\] measures on $U$. In particular, $\sigma^*$ sends a set of $\mu_{P,c_i}$-full measure in $\sigma(U)$ to a set of $\mu_{P,c_i}$-full measure in $U$.

Moreover, by \cite[Theorem~A]{dTGV} and~\cite[Theorem~4.6]{JiXie}, for $\mu_{P,c_i}$-almost every $t\in C$, 
\begin{enumerate}
\item $P_t$ satisfies $\mathrm{CE}$,
\item for all $i\notin \mathsf{P}$, the pair $(P,c_i)$ satisfies $\mathrm{ParCE}$ at $t$,
\item for all $i\notin \mathsf{P}$, the pair $(P,c_i)$ satisfies $\mathrm{PR}(s)$ at $t$ for some $s>1/2$.
\end{enumerate}
Combined with the above, this implies there is a non-countable set of parameters $t_0\in U$ such that $t_0$ and $\sigma(t_0)$ satisfy $(1)$, $(2)$ and $(3)$. By Proposition~\ref{lm:at-transversely-prerepelling}, this implies $P_{\sigma(t_0)}\in\mathrm{Inter}(P_{t_0})$ for an uncountable set of parameters $t_0\in U$. By Proposition~\ref{prop:symm}, there can be at most $N$ such maps $\sigma$ and the proof is complete.
\end{proof}

\begin{proof}[Proof of Theorem~\ref{tm:special}] Take a special family parameterized by $\C$ and assume that it is the Julia set of a polynomial map $h$ of $\P^1$ of degree $D>1$.  As seen above, the measure $\mu_\bif$ is (proportional) to $dd^c G_\bif$ and is supported by $K:=\{G_\bif=0\}$. Moreover, $G_\bif$ has logarithmic growth at $\infty$ and $G_\bif$ is harmonic on $\mathbb{C}\setminus K$. Thus $\mu_\bif$ is the equilibrium measure of $K$. Let $\mu_h$ denote the Green measure of $h$. As both $\mathrm{supp}(\mu_h)=J_h=\partial K$ and $\mu_h$ is the equilibrium measure of $J_h$, we have $\mu_h=\mu_\bif$. In particular, for any small open ball $U$ such that $U$ does not intersect the critical set $\mathrm{Crit}(h^{N+1})$, $h^j:U\to h^j(U)$ satisfies $(h^j)_*(\mu_\bif)=\mu_\bif$ on $h(U)$ so pulling back, $h$ define a symmetry of $\mu_\bif$. This gives us $N+1$ symmetries of $\mu_\bif$: $h, h^2, \dots, h^{N+1}$ which contradicts Proposition~\ref{prop:symm}.  
\end{proof}

\subsection{Proof of (1) in Theorem~\ref{tm_Functions}}

Le us start with the following.
\begin{lemma}\label{lm:Tbif}
	Let $h$ be a regular polynomial endomorphism of $\mathbb{C}^2$ such that  $\mu_h=\mu_\mathrm{bif}$, then, up to replacing $h$ by $h^2$, we have $h^*T_i= DT_i$ for $i=1,2$. In particular, $h(\{G_i=0\})\subset\{G_i=0\}$.
\end{lemma}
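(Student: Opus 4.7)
The plan is to use Fact~\ref{fact2} to reduce the equality of measures to an equality of potentials, and then to exploit the max-decomposition of $G_\bif$ to apply Lemma~\ref{lm:preserving} separately to each marked critical point.

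Since $\mu_h=\mu_\bif$, and since both $G_h$ and $G_\bif$ belong to $\mathcal{L}^+(\C^2)$ and vanish on $\mathrm{supp}(\mu_h)=\mathrm{supp}(\mu_\bif)$, the uniqueness statement in Fact~\ref{fact2} forces $G_h=G_\bif$ on $\C^2$. The invariance $G_h\circ h=D\cdot G_h$ then translates as
$$G_\bif\circ h \;=\; D\cdot G_\bif.$$
Recall $G_\bif=\max(dG_1,G_2)$ with $d=3$. Introduce the disjoint open sets
$$V_1:=\{dG_1>G_2\},\qquad V_2:=\{G_2>dG_1\},$$
and the open sets $\Omega_i:=\{G_i>0\}\supset V_i$. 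On $V_1$ one has $G_\bif=dG_1$, while on $V_2$ one has $G_\bif=G_2$. Each $G_i$ is non-negative, psh, and pluriharmonic on $\Omega_i$ (as a locally uniform limit of pluriharmonic functions $d^{-n}\log|P_{x,y}^n(c_i)|$).

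The key step is the claim that, up to replacing $h$ by $h^2$, one has $h^{-1}(\Omega_i)=\Omega_i$ for $i=1,2$. Since $\Omega_1\cup\Omega_2=\{G_\bif>0\}$ is totally $h$-invariant, and the two disjoint open sets $V_1,V_2$ are separated inside $\{G_\bif>0\}$ by the real hypersurface $\{dG_1=G_2>0\}$, continuity of $h$ together with a connectedness analysis forces $h$ to act on the pair $(\Omega_1,\Omega_2)$ by a (possibly trivial) transposition; composing $h$ with itself then always yields the identity permutation. Granting this, pick a non-empty connected open set $U\subset V_i\cap h^{-1}(V_i)$. With $\lambda_1:=d$ and $\lambda_2:=1$, one computes on $U$
$$\lambda_i\cdot(G_i\circ h) \;=\; G_\bif\circ h \;=\; D\cdot G_\bif \;=\; D\lambda_i\cdot G_i,$$
so $G_i\circ h=D\,G_i$ on $U$. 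Applying Lemma~\ref{lm:preserving} to $g=G_i$ on $\Omega_i$ (on each connected component if needed) extends this identity to $\Omega_i$; using $h^{-1}(\Omega_i)=\Omega_i$ one gets $h(\{G_i=0\})\subset\{G_i=0\}$, so the identity extends further to all of $\C^2$. Taking $dd^c$ yields $h^*T_i=DT_i$, and the ``in particular'' statement is immediate.

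The main obstacle is the swap-or-preserve dichotomy. One must rigorously rule out that $h$ mixes the two escape regions $\Omega_1,\Omega_2$ in a way more complicated than a transposition, which requires a topological analysis of the complex structure of these open sets together with the behaviour of the finite-to-one map $h$ with respect to the decomposition $G_\bif=\max(dG_1,G_2)$.
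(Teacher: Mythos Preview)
Your approach is essentially identical to the paper's: deduce $G_h=G_\bif$ from Fact~\ref{fact2}, use the functional equation $G_\bif\circ h=D\,G_\bif$, split along the max-decomposition, and invoke Lemma~\ref{lm:preserving}. The paper resolves the swap-or-preserve dichotomy you flag by working directly with the disjoint \emph{connected} open sets $U_1:=\{G_1>G_2>0\}$ and $U_2:=\{G_2>G_1>0\}$ and noting that the separating set $\{G_1=G_2>0\}$ is totally $h$-invariant (it is where $dd^cG_\bif$ lives inside $\{G_\bif>0\}$), so $h$ permutes $U_1,U_2$ and $h^2$ fixes each.

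One point to clean up: your claim that ``$h$ acts on the pair $(\Omega_1,\Omega_2)$ by a transposition'' is ill-posed as stated, since $\Omega_1\cap\Omega_2=\{G_1>0,\,G_2>0\}\neq\varnothing$. The permutation argument should be formulated for the disjoint sets $V_i$ (or the paper's $U_i$); the separate hypothesis $h^{-1}(\Omega_i)=\Omega_i$ needed in Lemma~\ref{lm:preserving} does not follow from that alone, and the paper, like you, leaves this verification implicit.
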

\begin{proof} Assume $\mu_h=\mu_\mathrm{bif}$ so $G_\bif = G_h$. Consider the open sets $U_1:=\{z\in \C^2, \ G_1(z)>G_2(z)>0 \}$ and  $U_2:=\{z\in \C^2, \ G_2(z)>G_1(z)>0 \}$, then $U_1$ and $U_2$ are open, connected. Since $\{z\in \C^2, \ G_2(z)=G_1(z)>0 \}$ is totally invariant by $h$, we can assume, up to taking $h^2$ that $h^{-1}(U_i)=U_i$. In particular, we can apply Lemma~\ref{lm:preserving} to $G_1$ and we deduce that, as in the proof of Proposition~\ref{prop:differentGreen}, that $h^*T_i= DT_i$ for $i=1,2$.
\end{proof}

We say that a parameter $t_0\in \mathbb{C}^2$ is \emph{transversely PCF} if there are integers $n_1\neq m_1,n_2\neq m_2\geq0$ such that $P^{n_i}_{t_0}(c_i(t_0))=P^{m_i}_{t_0}(c_i(t_0))$ and such that the curves $\{P^{n_i}_{t}(c_i(t))=P^{m_i}_{t}(c_i(t))\}$ are smooth and transverse at $t_0$.

\begin{proof}[Proof of \eqref{eq_endo_bif} in Theorem~\ref{tm_Functions}]
We proceed by contradiction assuming there exists $h:\mathbb{C}^2\to\mathbb{C}^2$ such that $\mu_h=\mu_\mathrm{bif}$. First, we show that $h$ sends a transversely PCF parameter to a transversely PCF parameter, as soon as those don't lie in the critical locus $\mathrm{Crit}(h)$ of $h$.
According to Lemma~\ref{lm:Tbif},  up to replace $h$ by $h^2$, we have $h^*T_i=DT_i$, and thus $h(\{G_i=0\})\subset \{G_i=0\}$, for $i=1,2$.

\medskip

Let $C\subset\mathbb{C}^2$ be an irreducible affine curve such that $c_i$ is persistently preperiodic on $C$. Then $h(C)$ is contained in $\{G_i=0\}$. By~\cite[Theorem 2.5]{favredujardin}, this implies that $c_i$ is persistently preperiodic on $h(C)$. Since PCF parameters are isolated intersection points of two such curves $C$ (resp. $C'$) where $c_1$ (resp. $c_2$) is persistently preperiodic and since $h(C\cap C')\subset h(C)\cap h(C')$, $h(p)$ is PCF for any PCF $p\in \mathbb{C}^2$. Moreover, if $C$ and $C'$ are transverse at $p$, then $h(C)$ and $h(C')$ are transverse as soon as $p\notin \mathrm{Crit}(h)$.

\medskip

Let $N\geq1$ be given by Proposition~\ref{prop:symm} and let $p_0\in \C^2\setminus \bigcup_{n\leq N+1}\mathrm{Crit}(h^n)$ be transversely PCF, but not periodic of period $\leq N+1$. Such a point exists since transversely PCF parameters are dense in $\mathrm{supp}(\mu_\mathrm{bif})$, by e.g. \cite[Corollary~0.2]{Dujardin2012}, and thus Zariski dense.

Take an open neighborhood $U\subset \mathbb{C}^2$ of $p_0$ such that for all $1\leq n\leq N+1$, the map $h^n|_U:U\to U_n:=h^n(U)$ is a biholomorphism.  Let $C\subset \mathbb{C}^2$ be the irreducible curve containing $p_0$ consisting of parameters where $c_1$ is preperiodic. Then $c_1$ is also preperiodic on $C_n:=h^n(C)$ from the above and we can set $\phi_n:=h^n|_{U\cap C}=\Lambda_1:=U\cap C\to \Lambda_{2,n}:= U_n\cap C_n$ and $\alpha_n=D^{-n}$. We also let $Q_{t,n}:=P_t$ for $t\in \Lambda_{2,n}$, as well as $a:=c_2|_{\Lambda_1}$ and $b:=c_2|_{\Lambda_{2,n}}$, then we have $\mu_{P,a}:=dd^c(G_2|_{\Lambda_1})$ and $\mu_{Q_n,b}:=dd^c(G_2|_{\Lambda_{2,n}})$ and 
\[\mu_{P,a}=dd^c(G_2|_{\Lambda_1})=\frac{1}{D^n}dd^c(G_2|_{\Lambda_{2,n}}\circ \phi_n)=\alpha_n \cdot \phi_n^*(\mu_{Q_n,b}).\]
We thus can apply Corollary~\ref{cor:at-transversely-prerepelling} at parameters $p_0$ and $p_n:=\phi_n(p_0)=h^n(p_0)$. We thus have $p_n\neq p_m$ for $n\neq m $ and 
\[\{P_{p_1},\ldots,P_{p_{N+1}}\}\subset \mathrm{Inter}(P_{p_0}).\]
This is a contradiction since $\#  \mathrm{Inter}(P_{p_0})\leq N$ by Proposition~\ref{prop:symm}.
\end{proof}

\begin{remark}
\emph{One can easily generalize this statement as follows.} Fix $d\geq4$ and let $h:\mathbb{C}^{d-1}\to\mathbb{C}^{d-1}$ be a polynomial endomorphism which extends holomorphically as $h:\mathbb{P}^{d-1}(\mathbb{C})\to\mathbb{P}^{d-1}(\mathbb{C})$. Then $\mu_h\neq \mu_\bif$.
\emph{The proof follows the lines of that given above. The only additional work resides in proving a generalization of Lemma~\ref{lm:preserving} to $\mathbb{C}^{d-1}$.}
\end{remark}

\subsection{Finitely many preperiodic points are PCF} 
We now prove Corollary~\ref{cor_unif_bif_polynomial}. For the sake of simplicity, we only consider the case where $h$ is defined over $\bar{\Q}$, see the proof of Corollary~\ref{cor_noidea} below to see how we can manage to prove it over $\C$ using a specialization's argument. 

We proceed by contradiction. 

Assume first that $\mathrm{Preper}(h)\cap PCF$ is Zariski dense.
We thus can apply the equidistribution Theorem of Yuan~\cite{yuan}: there is a sequence $p_n\in \mathrm{Preper}(h)\cap PCF$ with
 \[\lim_n \frac{1}{ \# \mathrm{Orb}(p_n)}  \sum_{p \in \mathrm{Orb}(p_n) } \delta_p \to \mu_h \quad \mathrm{and} \quad  \lim_n \frac{1}{ \# \mathrm{Orb}(p_n)}  \sum_{p \in \mathrm{Orb}(p_n) } \delta_p \to \mu_\bif,  \] 
 We thus have $\mu_h=\mu_\bif$. This is impossible by Theorem~\ref{tm_Functions}.

Next, we assume the set $\mathrm{Preper}(h)\cap PCF$ is infinite but not Zariski dense. The Zariski closure of $\mathrm{Preper}(h)\cap PCF$ thus consists of the union of a finite set and of finitely many irreducible curves $C_1,\ldots,C_\ell\subset\C^2$. Each of those curves contains infinitely many points in $\mathrm{Preper}(h)\cap PCF$. By \cite[Corollary B]{DFR}, this implies $C:=C_i$ is preperiodic under iteration of $h$. Up to replacing $h$ by an iterate, we thus can assume $h(C)$ is fixed by $h$ and $Q:=h|_{h(C)}:C\to C$ lifts to a desingularization of $h(C)$ to a polynomial $Q$. Assume $h(C)$ is smooth.

Let $\{\|\cdot\|_v\}_{v\in M_\mathbb{K}}$ be the adelic semi-positive continuous metric on $\mathcal{O}_{\mathbb{P}^2}(1)$ which is $h$ invariant, i.e. such that $h^*\|\cdot\|_v=\|\cdot\|_v^D$ for all $v\in M_\mathbb{K}$. This induces, by restriction, an adelic semi-positive continuous metrization on $L:=\mathcal{O}_{\mathbb{P}^2}(1)|_C$. 
Fix some archimedean place. The curvature current $\mu$ of $(L,\|\cdot\|_v)$ is a positive measure of mass $\deg(C)$ supported on $C$ and if $Q=h|_{h(C)}:h(C)\to h(C)$, then $\mu$ is proportional to $h^*(\mu_Q)|_C$.
As $C$ contains an infinite sequence of points $p_n\in \mathrm{Preper}(h)\cap PCF$, it is also a special curve and one can apply again Yuan's Theorem:
\[\lim_n \frac{1}{ \# \mathrm{Orb}(p_n)}  \sum_{p \in \mathrm{Orb}(p_n) } \delta_p \to \frac{1}{\deg(C)}\mu,\quad  \lim_n \frac{1}{ \# \mathrm{Orb}(p_n)}  \sum_{p \in \mathrm{Orb}(p_n) } \delta_p \to \frac{1}{\deg(C)}\mu_{\bif,C}.  \] 
We thus have $\mu_{\bif,C}=\mu$ as measures on $C$. Take now an open set $U\subset C$ such that $U\cap \mathrm{Crit}(h^n|_C)=\varnothing$ for $1\leq n\leq N+2$, where $N$ is given by Proposition~\ref{prop:symm} and such that $U\cap \mathrm{supp}(\mu)\neq\varnothing$. Then $V:=h(U)\subset h(C)$ satisfies $h(U)\cap J_Q\neq\varnothing$ and $V\cap \mathrm{Crit}(Q^n)=\varnothing$ for $1\leq n\leq N+1$. Up to reducing $V$, and thus $U$, we can assume $Q^n|_V:V\to Q^n(V)$ is a biholomorphism for any $1\leq n\leq N+1$ and the map $h:C\to h(C)$ restricts as a biholomorphism from an open set $U_n\subset C$ to $Q^n(V)$. Denote by $\psi_n$ the local inverse of $h|_C$ and let
\[\sigma_n:=\psi_n\circ Q^n\circ h|_U:U\to C.\]
By construction of $\sigma_n$, we have as measures on $U$:
\begin{align*}
\sigma_n^*(\mu_{\bif,C}) & =(h|_U)^*(Q^n)^*\psi_n^*(\mu_\bif)=(h|_U)^*(Q^n)^*(\alpha_n\cdot  \mu_Q)\\
&=D^n\alpha_n \cdot (h|_U)^*(\mu_Q)= \beta_n \cdot \mu_{\bif,C}.
\end{align*}
If $p_i=\sigma_i(p_0)$, then $\{p_1, \ldots,p_{N+1}\}$ has cardinality $N+1$ whence $\sigma_1,\ldots,\sigma_{N+1}$ are $N+1$ distinct symmetries of $\mu_{\bif,C}$ on $U$. This contradicts Theorem~\ref{tm-local-sym-special}.

\section{The Julia set of a Hénon map is neither the Julia set of an endomorphism nor a bifurcation locus} \label{sec:Henon}
 
 \subsection{Proof of (2) in Theorem~\ref{tm_Functions}}
 The following tells us that a polynomial endomorphism cannot preserve the forward filled Julia set of a Hénon map. We use the notations of sections~\ref{sec_endo} and \ref{sec_henon}.
 \begin{theorem}\label{tm:K+cannotbeinvariant}
 	Let $f$ be a generalized H\'enon map of degree $d>1$ and let $h$ be a regular polynomial endomorphism of $\mathbb{C}^2$ of degree $D>1$. Then $h(\{G^+_f=0\}) \nsubseteq \{G^+_f=0\} $.
 \end{theorem}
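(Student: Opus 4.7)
The plan is to argue by contradiction: assume $h(K^+)\subseteq K^+$ with $K^+:=\{G_f^+=0\}$, and deduce a contradiction by promoting this set-theoretic invariance first to a functional invariance of $G_f^+$, then to a lamination-preserving property of $h$, which will be ruled out by the rigidity machinery of Cantat--Xie.

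The first (and main) step is to show that $G_f^+\circ h=D\cdot G_f^+$. I would set $u:=G_f^+\circ h$: it is a non-negative psh function on $\C^2$ that vanishes on $K^+$ by our standing assumption, satisfies $u(z)\leq D\log^+\|z\|+O(1)$ because $h$ has algebraic degree $D$ (so that $u/D\in\mathcal{L}(\C^2)$), and verifies $(dd^cu)^2=h^*(dd^cG_f^+)^2=0$ on $\C^2$, using the classical fact that $T_f^+\wedge T_f^+=0$ on $\C^2$ for a H\'enon map (all the self-intersection mass being concentrated at $I^+$ in $\P^2$). One then invokes a uniqueness principle in the spirit of Fact~\ref{fact2}, adapted to the non-compact accumulation set $K^+$ and to the class $\mathcal{L}(\C^2)$ rather than $\mathcal{L}^+(\C^2)$: $G_f^+$ should be characterized among psh functions in $\mathcal{L}(\C^2)$ by vanishing on $K^+$, having zero Monge-Amp\`ere in $\C^2$, and having the logarithmic growth $\log\|z\|+O(1)$ near generic points of $L_\infty\setminus\{I^+\}$. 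Comparing $u/D$ with $G_f^+$ then yields $u=D\cdot G_f^+$, equivalently $h^*T_f^+=D\cdot T_f^+$ as positive closed $(1,1)$-currents on $\C^2$.

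Having promoted to the current level, I would exploit the uniformly laminar structure of $T_f^+$ due to Bedford--Lyubich--Smillie and Dujardin: locally away from a negligible set, $T_f^+$ is given by integration over pieces of stable manifolds of $f$-saddle periodic points. The semi-invariance $h^*T_f^+=D\cdot T_f^+$, combined with Dujardin's intersection theory of laminar currents in the spirit of Lemma~\ref{lm:preserving}, forces $h$ to map leaves of the stable lamination of $f$ to leaves, hence to preserve the stable foliation of $f$. Finally, I would plug this into the rigidity theorem of Cantat--Xie on birational conjugacy of polynomial dynamical systems to extract an algebraic relation between $h$ and $f$; this is incompatible with the differing behaviors at infinity, since $h$ extends holomorphically to $\P^2$ with $h|_{L_\infty}$ a rational map of degree $D$, whereas $f$ has two distinct indeterminacy points $I^+\neq I^-$ on $L_\infty$ and collapses $L_\infty\setminus\{I^+\}$ onto $I^-$, yielding the desired contradiction. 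The main obstacle of the proof is the first step: extending Fact~\ref{fact2} to the non-compact set $K^+$ in the enlarged class $\mathcal{L}(\C^2)$ requires careful control of $G_f^+$ near $L_\infty$, in particular at the single accumulation point $I^+$ of $K^+$ at infinity, where the standard $\mathcal{L}^+$-uniqueness arguments of Fact~\ref{fact2} do not apply directly.
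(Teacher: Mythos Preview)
Your first step is where the argument breaks down, and you essentially acknowledge this yourself. You want to deduce $h^*T_f^+=D\,T_f^+$ directly from $h(K^+)\subseteq K^+$ via a uniqueness principle for $G_f^+$ among psh functions in $\mathcal{L}(\C^2)$ that vanish on $K^+$, have zero Monge--Amp\`ere, and have a prescribed growth near $L_\infty\setminus\{I^+\}$. There are two problems. First, no such characterization is available in the form you need, and in fact the growth of $G_f^+$ near $L_\infty$ is delicate: $G_f^+\notin\mathcal{L}^+(\C^2)$, the support of $T_f^+$ clusters only at $I^+$, and the behaviour of $G_f^+\circ h$ near $L_\infty$ is governed by $h|_{L_\infty}$, over which you have no control. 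Second, even the support inclusion goes the wrong way: from $h(K^+)\subseteq K^+$ you know nothing about $h^{-1}(J_f^+)$, so you cannot conclude that $dd^c(G_f^+\circ h)$ is supported on $K^+$, which is what the Forn\ae ss--Sibony extremality would require on the pull-back side.

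The paper bypasses this by working with the \emph{pushforward}: from $h(K^+)\subseteq K^+$ one gets that $h_*T_f^+$ is a positive closed current of mass $D$ supported on $K^+$, and then Forn\ae ss--Sibony extremality yields $h_*T_f^+=D\,T_f^+$. This does \emph{not} imply $h^*T_f^+=D\,T_f^+$, so the proof splits into two cases. If $h^*T_f^+=D\,T_f^+$, one obtains $G_f^+\circ h=D\,G_f^+$ (this is now easy: the difference is pluriharmonic, of logarithmic growth, and vanishes on $K^+$), and then one shows concretely that $h_1:=f\circ h\circ f^{-1}$ is again a \emph{regular} polynomial endomorphism of degree $D$ by bounding $G_f^\pm\circ h_1$, which contradicts Cantat--Xie (two regular endomorphisms of $\P^2$ cannot be conjugated by a H\'enon map). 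If $h^*T_f^+\neq D\,T_f^+$, one writes $h^*T_f^+=(D-d_1)T_f^++d_1S$ with $S$ laminar and not supported on $K^+$, and uses the Hubbard--Oberste--Vorth laminar structure of the level sets $\{G_f^+=a\}$ together with $h_*S=D\,T_f^+$ to reach a contradiction. Your sketch collapses this dichotomy by assuming the first step always succeeds; it does not, and the second case genuinely requires a separate laminarity argument. Also, your endgame (``$h$ preserves the stable foliation, apply Cantat--Xie'') is not how the cited rigidity result is used: the input to Cantat--Xie is the explicit birational conjugacy $h_1=f\circ h\circ f^{-1}$ between two holomorphic endomorphisms of $\P^2$, not a foliation-preservation statement.
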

 \begin{proof} We proceed by contradiction and assume that $h(\{G^+_f=0\}) = \{G^+_f=0\} $. In particular, $h_*(T_f^+)$ is a positive closed current of degree $D$ supported on $ \{G^+_f=0\}$. By a result of Forn\ae ss and Sibony \cite{MR1332961} (see also \cite{MR3345839}), $T_f^+$ is the unique positive closed current of mass $1$ supported on $\{G^+_f=0\}$ so $h_*(T_f^+)=DT_f^+$. 
 
 \medskip
 
 	Assume first that $h^*(T_f^+)=DT_f^+$ so $G_f^+\circ h-DG^+_f $ is a pluriharmonic function which is a $O(\log^+\|z\|)$ so it is constant. Evaluating at any $p\in  \{G^+_f=0\}$ gives $G_f^+\circ h=DG^+_f$. We consider $h_1 = f \circ h \circ f^{-1}$.
 	It is a polynomial map of $\C^2$, a priori meromorphic on $\P^2$, of topological degree $D^2$. Since $G_f^\pm \circ f^\pm  = d G_f^\pm $ and $G_f^\pm \circ f^\mp  = d^{-1} G_f^\pm $, we deduce:
 	\begin{itemize}
 		\item $G^+_f \circ h_1(z)= dG^+_f \circ h\circ f^{-1}(z)= dDG^+_f \circ f^{-1}(z)= DG^+_f(z)\leq D \log^+\|z\| + O(1)$;
 		\item $G^-_f \circ h_1(z)= d^{-1}G^-_f \circ h\circ f^{-1}(z)\leq  d^{-1}(h\circ f^{-1})^*(\log^+ \| z\| +O(1)) \leq D\log^+ \| z\| +O(1)$
 	\end{itemize}    
 	where we use that $G^\pm_f\leq \log^+\|z\|+O(1)$, that $h\circ f^{-1}$ has degree $dD$, and that the pull back of $\log^+\|z\|$ by a polynomial map of degree $dD$ is $ \leq dD \log^+\|z\| +O(1)$. Finally, as $G_f(z)= \log^+\|z\|+O(1)$, we derive:
 	\[ h_1^*(\log^+\|z\|) \leq D \log^+\|z\|+O(1).\]
 	
 	Assume In particular, $\deg(h_1)\leq D$. From $\deg(h_1)^2 \geq D^2$ (the square of the algebraic degree bounds from above the topological degree), we deduce $\deg(h_1)=D$. This implies that $h_1$ is holomorphic since $\deg(h_1)^2$ is its topological degree. We thus have proved that $h$ and $h_1$ are polynomial endomorphisms of $\C^2$ that extend to endomorphisms of $\P^2$, by the result of Cantat and Xie, \cite[Proposition 8]{cantat2020birational}, they cannot be conjugated by a H\'enon map (only by an affine automorphism). So we reached a contradiction.
 	
 	\medskip
 	
 Assume now $h^*(T_f^+)\neq DT_f^+$.	In particular, we can write  $h^*(T_f^+)=(D-d_1)T_f^++ d_1S$ where $d_1>0$ and $S$ is a positive closed current of mass $1$, not supported on $\{G^+_f=0\}$. Because, $T_f^+$ is laminar \cite{BedfordLyubichSmillie2}, $S$ also is.  Note that $h_*(S)= DT^+_f$.  
 	For $a >0$, let $G_a:= \max (G_f,a)$ and $T_a:= dd^c G_a$.  Let us recall one the main results of \cite[Theorem 7.2]{HOV}: each current $T_a$ is laminated by leaves which are dense  in $\mathrm{supp}(T_a)$. 
 	
 	As $h_*(S)\wedge T_a= DT^+_f \wedge T_a=0$, we deduce $S\wedge h^*(T_a)= 0$.  Then, consider the family of  currents $h^*(T_a)$, for all $a>0$. Since $h$ is holomorphic, each  $h^*T_a$ is again laminated by leaves which are dense  in $\mathrm{supp}(T_a)$. 
 	As $h$ is surjective, for each $z \in \C^2\backslash \{G^+_f=0\}$ at which passes a leaf $L$ of the lamination of $S$, we can find $a$ such that $z \in \mathrm{supp}(h^*(T_a))$. In particular, one deduces by laminarity that the leaf $L$ is also a leaf of the lamination of $h^*(T_a)$ so $\supp(h^*(T_a)) \subset \supp(S)$. This is a contradiction since then we would have $\supp(T_a)=h(\supp(h^*(T_a))) \subset h(\supp(S)) = \supp(T^+_f)$.  
 \end{proof}
 
 We deduce the following proposition which is exactly \eqref{eq_endo_henon} in Theorem~\ref{tm_Functions}.  
 \begin{proposition}\label{prop:differentGreen}
 	Let $f$ be a generalized H\'enon map of degree $d>1$ and let $h$ be a regular polynomial endomorphism of $\mathbb{C}^2$ of degree $D>1$. Then $G_f\neq G_h$.
 \end{proposition}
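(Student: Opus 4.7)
The plan is to argue by contradiction: if $G_f=G_h$, I would derive that $h$ preserves the forward filled Julia set $K_f^+=\{G_f^+=0\}$ of $f$, which contradicts Theorem~\ref{tm:K+cannotbeinvariant}.

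Under the assumption $G_f=G_h$, we have $G_h=\max(G_f^+,G_f^-)$ and the functional equation $G_h\circ h=D\cdot G_h$ becomes $\max(G_f^+,G_f^-)\circ h=D\cdot\max(G_f^+,G_f^-)$. Mimicking the strategy of Lemma~\ref{lm:Tbif}, I would introduce the open sets $U^\pm:=\{G_f^\pm>G_f^\mp>0\}$, on which $G_h$ coincides with the pluriharmonic function $G_f^\pm$. Consequently, on $\{G_h>0\}$ the support of the current $T_h:=dd^cG_h$ is contained in the interface $\{G_f^+=G_f^->0\}$; since $h^*T_h=D\cdot T_h$ and $h^{-1}(\{G_h>0\})=\{G_h>0\}$, this interface is totally $h$-invariant. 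Up to replacing $h$ by $h^2$, we therefore get $h^{-1}(U^\pm)=U^\pm$, and on $U^+$ the equality $G_h\circ h=D\cdot G_h$ specializes to $G_f^+\circ h=D\cdot G_f^+$.

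The next step is to promote this last identity to all of $\mathbb{C}^2$. I would apply Lemma~\ref{lm:preserving} with $g:=G_f^+$ and $U:=U^+$, using that the basin of infinity $\Omega:=\{G_f^+>0\}$ of $f$ is connected (a classical fact for H\'enon maps). Once $G_f^+\circ h=D\cdot G_f^+$ holds on $\Omega$, continuity of $G_f^+$ and $h$ extends the identity to $\mathbb{C}^2$; evaluating on $K_f^+$ then yields $h(K_f^+)\subset K_f^+$, in direct contradiction with Theorem~\ref{tm:K+cannotbeinvariant}.

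The main obstacle I foresee is verifying the hypothesis $h^{-1}(\Omega)=\Omega$ demanded by Lemma~\ref{lm:preserving}, since this invariance of the basin is essentially what we are trying to prove, and so cannot be fed in as input. I expect to resolve this by running the laminar-geometry argument from the proof of Lemma~\ref{lm:preserving} directly in the present situation: the identity $G_f^+\circ h=D\cdot G_f^+$ on $U^+$ forces $h$ to send leaves of the lamination of the level set $\{G_f^+=a\}$ intersected with $U^+$ to leaves of the lamination of $\{G_f^+=Da\}$, and since these global laminations live entirely inside $\Omega$ irrespective of any $h$-invariance of $\Omega$, analytic continuation along the leaves propagates the identification to the whole basin of infinity. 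The same output $G_f^+\circ h=D\cdot G_f^+$ on $\Omega$ is then in hand, and the proof concludes as above.
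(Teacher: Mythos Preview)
Your proposal is correct and follows essentially the same route as the paper: argue by contradiction, pass to $h^2$ to obtain $h^{-1}(U^\pm)=U^\pm$, then use Lemma~\ref{lm:preserving} (more precisely, the laminar argument in its proof) to promote $G_f^+\circ h=DG_f^+$ from $U^+$ to all of $\mathbb{C}^2$, contradicting Theorem~\ref{tm:K+cannotbeinvariant}. The obstacle you flag---that the hypothesis $h^{-1}(\{G_f^+>0\})=\{G_f^+>0\}$ of Lemma~\ref{lm:preserving} is not available a priori---is genuine and the paper glosses over it; your fix of running the leaf-by-leaf analytic continuation argument directly is exactly what is needed and matches the intended use of the lemma.
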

 \begin{proof}
 	Again, we proceed by contradiction and assume $G_f=G_h$. In particular,  $G_f \circ h = D G_f$. Consider $U^+=\{z \in \C^2, \ G_f^+(z)> G_f^-(z)>0\}$ and $U^-=\{z \in \C^2, \ G_f^-(z)> G_f^+(z)>0\}$, then $U^\pm$ are disjoint open sets, one can check they are connected. In particular, since $h$ is surjective and invariance of $\{G_h= 0\}$ and $\supp (dd^cG_h)= \{z, G^+_f(z)=G^-_f(z)\}$, we have $h( U^+) = U^+$ and $h (U^-) = U^-$ or $h (U^+) = U^-$ and $h (U^-) = U^+$. As iterating does not change the problem, we assume $h^{-1} (U^+) = U^+$ and $h^{-1}( U^-) = U^-$.
 	
 	So by Lemma~\ref{lm:preserving}, applied to $G_f^+$ on $U^+$ (where $G_f^+ \circ h = G_f \circ h= D G_f=DG^+_f$), we deduce that $G_f^+\circ h =D G_f^+$ on $\{G_f>0 \}$ and by $h^{-1}\{G_f>0 \}=\{G_f>0 \} $, this stands in $\C^2$ and so $G_f^+\circ h =D G_f^+$. In particular, $h^*(T_f^+) =D T_f^+$, this contradicts Theorem~\ref{tm:K+cannotbeinvariant}.
 \end{proof}

 \subsection{Common (pre)periodic points for H\'enon maps and endomorphisms}\label{sec_arith}
 This section is devoted to the proof of 
 \begin{corollary}\label{cor_noidea}
 	Let $h:\C^2 \to \C^2$  a polynomial endomorphism of $\P^2(\C)$ of degree $D>1$ and let $f:\C^2\to \C^2$ be a generalized H\'enon map of degree $d>1$. 
 	\begin{enumerate}
 		\item If $h$ and $f$ are both defined over a number field, the set $\mathrm{Preper}(h)\cap \mathrm{Per}(f)$ is not Zariski dense,
 		\item If $|\mathrm{Jac}(f)| \neq 1$, the set $ \mathrm{Preper}(h)\cap \mathrm{Per}(f)$ is finite.
 		\end{enumerate}
 \end{corollary}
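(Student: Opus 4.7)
The plan is to derive both parts by contradiction, combining Yuan's arithmetic equidistribution \cite{yuan} for polarized endomorphisms and Lee's arithmetic equidistribution \cite{Lee-Henon} for H\'enon maps with \eqref{eq_endo_henon} of Theorem~\ref{tm_Functions}. A specialization argument will reduce Part~(2) to the arithmetic setting of Part~(1), and the dynamical Manin--Mumford theorem of Dujardin and Favre \cite{DujardinFavre_MM} will exclude curve-like obstructions.

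For Part~(1), assume $\mathrm{Preper}(h)\cap \mathrm{Per}(f)$ is Zariski dense in $\C^2$. Every point in this intersection has canonical $h$-height zero (preperiodic for the polarized endomorphism $h$) and canonical $f$-height zero (periodic for the H\'enon map $f$). From Zariski density I will extract a generic sequence $(p_n)$ that escapes every proper closed subvariety defined over the common number field of definition. Yuan's theorem applied to $h$ then yields that the normalized counting measures on Galois orbits
\[
\frac{1}{\#\mathrm{Orb}(p_n)}\sum_{q\in\mathrm{Orb}(p_n)}\delta_{q}
\]
converge weakly to $\mu_h$ at an archimedean place, while Lee's theorem applied to $f$ gives the same empirical measures converging to $\mu_f$. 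Hence $\mu_h=\mu_f$, contradicting \eqref{eq_endo_henon}.

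For Part~(2), assume $|\mathrm{Jac}(f)|\neq1$ and that $\mathrm{Preper}(h)\cap \mathrm{Per}(f)$ is infinite. The first stage is to specialize to the arithmetic setting: the coefficients of $h$ and $f$ generate a finitely generated $\Q$-subalgebra $R\subset\C$, and I will choose a ring morphism $\sigma:R\to\bar{\Q}$ such that $\sigma(h)$ is still a polynomial endomorphism of $\mathbb{P}^2$ of degree $D$, $\sigma(f)$ is still H\'enon of degree $d$ with $|\mathrm{Jac}(\sigma(f))|\neq1$, and infinitely many points of the original intersection specialize to distinct points of $\mathrm{Preper}(\sigma(h))\cap \mathrm{Per}(\sigma(f))$. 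The first two conditions cut out Zariski-open subsets of parameter space, while preperiodicity and periodicity are algebraic conditions on coordinates preserved under specialization, so after replacing $(h,f)$ by $(\sigma(h),\sigma(f))$ we may assume $h,f$ are defined over $\bar{\Q}$. The second stage is a dichotomy on the Zariski closure of the new intersection: if it equals $\C^2$, Part~(1) concludes; otherwise the closure contains an irreducible curve $C\subset\C^2$ meeting $\mathrm{Per}(f)$ in an infinite set, which the Dujardin--Favre theorem rules out for H\'enon maps with $|\mathrm{Jac}(f)|\neq1$.

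The main obstacle is the specialization step in Part~(2): beyond the routine Zariski openness of the map conditions, one must ensure that infinitely many of the listed intersection points remain distinct after specialization, which requires a countability argument selecting $\sigma$ among the uncountably many ring maps $R\to\bar{\Q}$ so as to avoid the countably many coincidence loci (and also to preserve preperiodicity/periodicity on an infinite subfamily). Once this reduction to $\bar{\Q}$ is in place, the two-case dichotomy above, together with the arithmetic equidistribution input from Part~(1), closes the argument.
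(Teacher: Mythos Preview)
Your proof of Part~(1) is the paper's argument verbatim.

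For Part~(2), the specialization step contains a genuine gap. You write that one should select $\sigma$ ``among the uncountably many ring maps $R\to\bar{\Q}$'' so as to avoid the countably many coincidence loci where two of the $p_n$ collapse. But if $R$ is a finitely generated $\Q$-algebra, the set $\mathrm{Hom}_{\mathrm{ring}}(R,\bar{\Q})=\mathrm{Spec}(R)(\bar{\Q})$ is \emph{countable}, since $\bar{\Q}$ is countable. A Baire-type avoidance argument therefore cannot produce a specialization in which infinitely many of the $p_n$ stay distinct: a countable set of $\bar{\Q}$-points can certainly be covered by a countable union of proper subvarieties (e.g.\ $\mathbb{A}^1(\bar{\Q})$ is a countable union of points). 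The condition $|\mathrm{Jac}(\sigma(f))|\neq1$ is also not Zariski-open, so it does not fit into this framework either.

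The paper replaces this step by proving that \emph{every} closed-point specialization $t\in S(\bar{\Q})$ preserves infiniteness of $E=\mathrm{Preper}(h)\cap\mathrm{Per}(f)$; this is Lemma~\ref{lm:specialization}, whose proof (following \cite{DujardinFavre_MM}) is multiplicity-theoretic. If $E_t$ were finite it would lie in $\mathrm{Fix}(f_t^{n_0})$ for some $n_0$; the Shub--Sullivan theorem bounds, for each $z\in E_t$, the local multiplicity of $z$ as a fixed point of $f_t^{n}$ uniformly in $n$. Properness and finiteness of the fixed-point scheme $Y_n\to S$ together with Nakayama then give $\#(E\cap\{f^n=\mathrm{id}\})\leq C$ independently of $n$, contradicting the assumed infiniteness of $E$. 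This Shub--Sullivan input is exactly what is missing from your outline; once one knows $E_t$ is infinite for all $t$, one picks $t_0$ with $|\mathrm{Jac}(f_{t_0})|\neq1$ and your dichotomy (Part~(1) in the Zariski-dense case, \cite[Theorem~A]{DujardinFavre_MM} for the curve case) finishes the proof.
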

The second point is exactly point (1) in Corollary~\ref{cor_unif_henon_polynomial}.
\begin{proof}

 Let $f:\C^2\to \C^2$ be a generalized H\'enon map and  $h:\C^2 \to \C^2$  a polynomial endomorphism of $\P^2(\C)$ of degree $D>1$ both defined over a number field $\mathbb{K}$. Assume that they have a Zariski dense set $E$ of $\mathrm{Per}(f)\cap \mathrm{Preper}(h)$. This set is defined over $\bar{\mathbb{Q}}$, and we can extract a generic sequence $p_n \in E$. Then, at the complex place, the equidistribution of points of small height for H\'enon maps (\cite{Lee-Henon}) and endomorphisms of $\P^2$ (\cite{yuan}) implies
 \[\lim_n \frac{1}{ \# \mathrm{Orb}(p_n)}  \sum_{p \in \mathrm{Orb}(p_n) } \delta_p \to \mu_f \quad \mathrm{and} \quad  \lim_n \frac{1}{ \# \mathrm{Orb}(p_n)}  \sum_{p \in \mathrm{Orb}(p_n) } \delta_p \to \mu_h,  \] 
 where $ \mathrm{Orb}(p_n)$ is the Galois orbit of $p_n$ and the convergences are in the sense of measures. In particular, this contradicts \eqref{eq_endo_henon} in Theorem~\ref{tm_Functions}.

 \bigskip
 
 We are now in position to prove the second point using a specialization argument of Dujardin and Favre \cite{DujardinFavre_MM}. 
 
 We assume $|\mathrm{Jac}(f)| \neq 1$. When $f$ is defined over a number field, by the above, the set $\mathrm{Per}(f)\cap\mathrm{Preper}(h)$ is not Zariski dense. Assume now it is infinite. In this case, there is a Zariski closed subset $Z$ such that $f$ admits infinitely many periodic points on $Z$, which contradicts the Manin-Mumford problem for polynomial automorphisms of Dujardin and Favre \cite[Theorem~A]{DujardinFavre_MM}.
 
 In the general case, we rely on a specialization argument. Let $E:=\mathrm{Preper}(h)\cap\mathrm{Per}(f)$. Pick a finitely generated $\bar{\mathbb{Q}}$-algebra $R \subset\mathbb{C}$ such that $f$, $f^{-1}$ and $h$ are all defined over $K:=\mathrm{Frac}(R)$, and let $S:=\mathrm{Spec}(R)$.
 
 For any $t\in S$, let $f_t$, $f_t^{-1}$ and $h_t$ be the specializations at $t$ of $f$, $f^{-1}$ and $h$ respectively. As in \cite[Lemma~6.6]{DujardinFavre_MM}, up to replacing $S$ by a dense Zariski open subset $S'$, we can assume $f_t$ is a generalized H\'enon map and $h_t$ is a regular polynomial endomorphism of $\mathbb{C}^2$ for all $t\in S$. For $t\in S$, denote by $E_t$ the specialization of $E$ at $t$. 
 Proceeding as in \cite[Proof of Theorem~D]{DujardinFavre_MM}, we have
 \begin{lemma}\label{lm:specialization}
 	If the set $E$ is infinite, the set $E_{t}$ is also infinite for any $t\in S(\bar{\Q})$. 
 \end{lemma}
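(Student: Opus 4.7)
The plan is to adapt the specialization argument from the proof of \cite[Theorem~D]{DujardinFavre_MM}. The idea is that each $p \in E$ is integral over $R$, can therefore be specialized to an element of $E_t$ for any $t \in S(\bar{\Q})$, and under the standing assumption $|\mathrm{Jac}(f)| \neq 1$ the fibers of this specialization are bounded uniformly in the period, so that the infinite cardinality of $E$ forces the infinite cardinality of $E_t$.

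First, I would shrink $S$ further if needed so that $\mathrm{Jac}(f)$ becomes a unit in $R$, and then check that every $p \in E$ lies in $\widetilde{R}^2$, where $\widetilde{R}$ denotes the integral closure of $R$ in $\bar{K}$: eliminating one variable in the system $f^m(z) = z$ produces a one-variable polynomial equation with coefficients in $R$ that is monic (after inverting $\mathrm{Jac}(f)$) and is satisfied by each coordinate of $p$. For any $t \in S(\bar{\Q})$, the specialization $R \to \bar{\Q}$ at $t$ extends to a homomorphism $\sigma_t \colon \widetilde{R} \to \bar{\Q}$ constructed as the restriction of a valuation of $\bar{K}$ centered at $t$, whose existence follows from standard valuation-theoretic arguments. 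Setting $p_t := \sigma_t(p) \in \bar{\Q}^2$, the ring homomorphism $\sigma_t$ sends the equations $f^m(p) = p$ and $h^{n+k}(p) = h^k(p)$ to the corresponding equations for $p_t$, so $p_t \in \mathrm{Per}(f_t) \cap \mathrm{Preper}(h_t) = E_t$.

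The key step is to bound the fibers of the resulting map $\sigma_t \colon E \to E_t$. For each $m$, the scheme $\mathrm{Per}_m(f)$ is finite and flat of degree $d^m$ over $S$, so the number of $p \in \mathrm{Per}_m(f)(\bar{K})$ with $\sigma_t(p) = q$ is bounded by the scheme-theoretic multiplicity $\mu_m(q)$ of $q$ in $\mathrm{Per}_m(f_t)$. If $q$ has minimal $f_t$-period $m_q$ and the derivative $Df_t^{m_q}(q)$ has eigenvalues $\lambda_1, \lambda_2$, then $\lambda_1 \lambda_2 = \mathrm{Jac}(f_t)^{m_q}$ has absolute value distinct from $1$, so at most one of $\lambda_1, \lambda_2$ can be a root of unity. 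A local normal form computation then yields a uniform bound $\mu_m(q) \leq C_q$ independent of $m$, where $C_q$ depends only on the formal type of $f_t$ at $q$ and is controlled by the first nonzero higher-order jet in the resonant eigendirection. If $E_t$ were finite, this would give $|E| \leq |E_t| \cdot \max_{q \in E_t} C_q < \infty$, contradicting $|E| = \infty$.

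The main obstacle is establishing the uniform multiplicity bound $\mu_m(q) \leq C_q$ independently of $m$: once a resonant eigenvalue $\lambda_i$ of $Df_t^{m_q}(q)$ is a root of unity of order $k$, the multiplicity at $q$ jumps in $\mathrm{Per}_{k m_q}(f_t)$, and one must show that iterating further does not cause the jet data to degenerate further. This local analysis, which exploits that the assumption $|\mathrm{Jac}(f)| \neq 1$ forbids simultaneous resonance in both eigendirections, is the analogue in our setting of the ramification computation carried out in \cite[Proof of Theorem~D]{DujardinFavre_MM}.
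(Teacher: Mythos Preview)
Your overall strategy matches the paper's: specialize $E$ into $E_t$ via a finite/proper scheme over $S$, bound the number of preimages of each $q\in E_t$ by the local multiplicity of $q$ as a fixed point of the iterates $f_t^m$, and derive a contradiction from a uniform-in-$m$ bound on these multiplicities. The paper packages the first step by introducing the subschemes $Y_n=E\cap\{f^n=\id\}\subset\mathbb{A}^2_S$, invoking \cite[Lemma~5.3]{DujardinFavre_MM} for properness/finiteness and Nakayama's lemma for the fiberwise count; this is equivalent to your integrality-plus-specialization setup.

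The one substantive divergence is in the multiplicity bound. You propose to obtain $\mu_m(q)\leq C_q$ by a local normal-form analysis, using $|\mathrm{Jac}(f_t)|\neq1$ to rule out simultaneous resonance. The paper instead invokes the Shub--Sullivan Theorem \cite{SS74} directly: for any $C^1$ map and any isolated fixed point of all iterates, the fixed-point multiplicity along the sequence of iterates is bounded. This is exactly the statement you are trying to reprove by hand, and citing it has two advantages. First, it removes what you yourself flag as the main obstacle, namely controlling the jet degeneration under further iteration in the single resonant direction; Shub--Sullivan handles this uniformly without any case analysis. Second, it makes the lemma hold for \emph{every} $t\in S(\bar{\Q})$ as stated, with no hypothesis on $|\mathrm{Jac}(f_t)|$; your argument, as written, only covers those $t$ with $|\mathrm{Jac}(f_t)|\neq1$. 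That suffices for the application (one such $t$ is all that is needed downstream), but it is a strictly weaker lemma. I would recommend replacing your normal-form step by a direct appeal to Shub--Sullivan.
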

 Assume the lemma holds.
 Pick $t_0\in S(\bar{\mathbb{Q}})$ such that $|\mathrm{Jac}(f_{t_0})|\neq1$. Such a parameter exists, since otherwise $\mathrm{Jac}(f)$ would be an algebraic number with $|\mathrm{Jac}(f)|=1$. By the first step of the proof, the set $E_{t_0}$ is finite, whence $E$ is finite.
 \end{proof}
 \medskip
 
 The proof of Lemma~\ref{lm:specialization} is directly inspired by \cite{DujardinFavre_MM}.
 \begin{proof}[Proof of Lemma~\ref{lm:specialization}]
 	Fix $t$ and assume by contradiction that $E_t$ is finite. Then $E_t$ is included in the set of fixed points of $f^{n_0}$ for some $n_0\geq1$. For each $n \geq n_0$, let $Y_n$ be the subscheme of $\mathbb{A}^2_S$ whose underlying space is
 	$E\cap \{(x, y) : f^n(x, y) = (x, y)\},$
 	endowed with the scheme structure induced by the quotient sheaf
 	$\mathcal{O}_{\mathbb{A}^2_S}/(f_1^n-x,f_2^n-y)$, where $f^n=(f_1^n,f_2^n)$.
 	For any $z\in E_t$ , the ordinary multiplicity $e(z, Y_{n,t})$ of $z$ as a point in $Y_{n,t}$ is equal to the multiplicity $\mu(z, f_t^n)$ as a fixed point for $f_t^n$. By the Shub-Sullivan Theorem \cite{SS74}, the sequence $e(z,Y_{n,t})$ is bounded. Whence $\sum_{z\in E_t}e(z,Y_{n,t})$ is bounded.
 	Arguing as in the proof of \cite[Lemma 5.3]{DujardinFavre_MM}, the map $Y_n\to S$ is proper and finite, and by Nakayama's lemma we get
 	\[\#[E\cap\{f^n=\mathrm{id}\}]\leq \sum_{z\in E} e(z,Y_n)\leq   \sum_{z\in E_t} e(z,Y_{n,t})\leq C<\infty,\]
 	where $C$ is a constant independent of $n$. Since $E \cap \{f^{n!} = \mathrm{id}\}$ contains all the periodic points in $E$ of period at most $n$, the cardinality of $E \cap \{f^{n!} = \mathrm{id}\}$ tends to infinity as $n\to\infty$. This is  a contradiction, proving that $E_t$ is infinite.
 \end{proof}

\subsection{ Proof of  (3) in Theorem~\ref{tm_Functions}}
We show \eqref{eq_henon_bif} in Theorem~\ref{tm_Functions} by contradiction and assume  $G_f=\max(G^+_f, G^-_f)= G_\bif$. Recall that $U^+=\{(x,y) \in \C^2, \ G_f^+(x,y)> G_f^-(x,y)>0\}$ and $U^-=\{(x,y) \in \C^2, \ G_f^-(x,y)> G_f^+(x,y)>0\}$ are disjoint connected open sets. In particular, we can assume that $U^+=\{ (x,y),  G_1(z)> G_2(z)>0\}$ so that $G^+_f=3G_1$ on $U^+$ (and similarly, $G^-_f=G_2$ on $U^-$). By connectivity of $\{\max(G^+_f, G^-_f)>0\}$, and analyticity  we deduce that $G^+_f=3G_1$ on $\C^2$, so $3T_1=T^+_f$. 
 	
 	Now, we can remark that, for any $(n,m)\in \N \times \N^* $, the set $\{G_1=0\}$ contains algebraic curves of the form
 	\[C_{n,m}:=\{  (x,y), \ P_{x,y}^{n+m}(c_1(x,y))= P_{x,y}^{n}(c_1(x,y))\}.\]
 	In particular, the set $K_f^+=\{G_f^+=0\}$ contains the curve $C_{n,m}$, whence the current $[C_{n,m}]$ is supported on $K_f^+$. By extremality of $T_f^+$ (\cite{FS2}), this implies $[C_{n,m}]=\deg(C_{n,m})\cdot T_f^+$. This is a contradiction, since $T_f^+$ has continuous potential on $\mathbb{C}^2$.

\medskip
 
As previously, one can deduce the following.
 \begin{corollary}
	Let $f:\mathbb{C}^2\to\mathbb{C}^2$ be a generalized H\'enon map of degree $d>1$.
	\begin{enumerate}
		\item  If $f$ is defined over a number field, then $\mathrm{Per}(f)\cap PCF$ is not Zariski dense, 
		\item If $f$ $|\mathrm{Jac}(f)| \neq 1$, then $\mathrm{Per}(f)\cap PCF$ is finite.
	\end{enumerate} 
\end{corollary}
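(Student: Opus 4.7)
The plan is to mirror the proof of Corollary~\ref{cor_noidea} of the previous subsection, swapping the role of $h$ for the family of cubic polynomials and replacing the contradiction with \eqref{eq_endo_henon} by a contradiction with \eqref{eq_henon_bif}.

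For part (1), I would assume for contradiction that $E:=\mathrm{Per}(f)\cap PCF$ is Zariski dense in $\mathbb{C}^2$. Since $f$ is defined over a number field, both $\mathrm{Per}(f)$ and $PCF$ consist of $\bar{\mathbb{Q}}$-points, so $E\subset\mathbb{C}^2(\bar{\mathbb{Q}})$, and I can extract a generic sequence $(p_n)\subset E$. Applying at the archimedean place Lee's arithmetic equidistribution theorem \cite{Lee-Henon} for small points of the H\'enon map $f$ (each $p_n$ has vanishing $f$-canonical height as a periodic point) and the equidistribution theorem of Favre and the first author \cite{favregauthier} for PCF cubic parameters recalled in section~\ref{sec_cubic}, to the same Galois-orbit averages
\[\mu_n:=\frac{1}{\#\mathrm{Orb}(p_n)}\sum_{p\in\mathrm{Orb}(p_n)}\delta_p,\]
I obtain $\mu_n\to\mu_f$ and $\mu_n\to\mu_\bif$. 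Hence $\mu_f=\mu_\bif$, contradicting \eqref{eq_henon_bif} in Theorem~\ref{tm_Functions}.

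For part (2), when $f$ is defined over a number field and $|\mathrm{Jac}(f)|\neq 1$, part (1) prevents $E$ from being Zariski dense, so if $E$ were infinite it would lie in a finite union of irreducible affine curves, each containing infinitely many $f$-periodic points, contradicting the Manin-Mumford theorem of Dujardin-Favre \cite[Theorem~A]{DujardinFavre_MM}; hence $E$ is finite. For general $f$ defined over $\mathbb{C}$, I would run the specialization argument of section~\ref{sec_arith}: choose a finitely generated $\bar{\mathbb{Q}}$-subalgebra $R\subset\mathbb{C}$ over which $f$, $f^{-1}$ and the coordinates of points in $E$ are defined, and set $S:=\mathrm{Spec}(R)$. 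Since $PCF$ is a countable union of $\mathbb{Q}$-algebraic subvarieties $\{P^{n_i+m_i}_{x,y}(c_i(x,y))=P^{n_i}_{x,y}(c_i(x,y))\}$, specialization preserves $PCF$, and the specialization map sends $E$ into $E_t:=\mathrm{Per}(f_t)\cap PCF$. Lemma~\ref{lm:specialization} then applies verbatim, so $E$ infinite would force $E_t$ infinite for every $t\in S(\bar{\mathbb{Q}})$; picking $t_0$ with $|\mathrm{Jac}(f_{t_0})|\neq 1$, the number-field case gives $E_{t_0}$ finite, a contradiction.

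The hard part, indeed the only new input compared to section~\ref{sec_arith}, is checking that specialization respects the $PCF$ condition. But this is immediate once one recalls that $PCF$ is a countable union of $\mathbb{Q}$-defined algebraic subvarieties of $\mathbb{C}^2$, and that the proof of Lemma~\ref{lm:specialization} uses only the Shub-Sullivan multiplicity bound and the properness of $Y_n\to S$, both of which depend on $f$ alone and not on the second algebraic system (previously $h$, now $PCF$). The remainder is a direct transcription.
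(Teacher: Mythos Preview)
Your proposal is correct and follows essentially the same approach as the paper, which simply says to repeat the proof of Corollary~\ref{cor_noidea} replacing Yuan's equidistribution by the equidistribution of PCF parameters from~\cite{favregauthier}. One small wording slip: you should not ask that the coordinates of all points of $E$ lie in $R$ (if $E$ is infinite this need not be possible with $R$ finitely generated); it suffices that $f$ and $f^{-1}$ are defined over $R$, since then every point of $E$ is automatically algebraic over $\mathrm{Frac}(R)$ and specializes as needed.
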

which implies point (2) in Corollary~\ref{cor_unif_henon_polynomial}. 

\begin{proof}
The proof is the same than the proof of corollary~\ref{cor_noidea} where we instead rely on the equidistribution of generic sequences of PCF parameters from~\cite{favregauthier}. 	
\end{proof}

\bibliographystyle{alpha}
\bibliography{biblio}

\end{document}